\newtheorem*{rep@theorem}{\rep@title}
\newcommand{\newreptheorem}[2]{%
\newenvironment{rep#1}[1]{%
 \def\rep@title{#2 \ref{##1}}%
 \begin{rep@theorem}}%
 {\end{rep@theorem}}}
\newtheorem{theorem}{Theorem}[section]
\newtheorem{lemma}[theorem]{Lemma}
\newtheorem{corollary}[theorem]{Corollary}
\newtheorem{definition}[theorem]{Definition}
\renewcommand{\P}{\mathcal{P}}
\newcommand{\Q}{\mathcal{Q}}
\begin{document}

\title{On the roots of hypergraph chromatic polynomials}
\author{Sukhada Fadnavis}
\maketitle

\begin{abstract}
	Let $G = (V,E)$ be a finite, simple, connected graph with chromatic polynomial $P_G(q)$. Sokal \cite{sokal} proved that the roots of the chromatic polynomial of $G$ are bounded in absolute value by $KD$ where, $D$ is the maximum degree of the graph and $7< K < 8$ is a constant. In this paper we generalize this result to uniform hypergraphs. To prove our results we will use the theory of the bounded exponential type graph polynomials.
\end{abstract}

\section{Introduction} 

Recall that the chromatic polynomial $P_G(q)$ of a graph $G$ is defined such that $P_G(q)$ equals the number of proper colorings of $G$ with $q$ colors. We consider the following generalization of the chromatic polynomial. Let $H=(V,E)$ be a $t$--uniform hypergraph. Let $P_H(x)$ be the polynomial such that $P_H(q)$ equals the number of colorings of $H$ with $q$ colors such that there is no edge in $H$ which has all of its vertices colored with the same color. Note that in the usual case $t$ is simply $2$. We call such a coloring a \emph{proper coloring} of $H$. One can easily prove that $P_H(x)$ is indeed a polynomial which we will show in Section~\ref{GenChrom}. When we want to emphasize the role of $H$ we will also use the notation $P(H,x)$ instead of $P_H(x)$.

Sokal \cite{sokal} showed that the roots of the chromatic polynomial of $G$ are bounded above in absolute value by $KD$, where $D$ is the maximum degree of the graph and $7< K < 8$ is a constant. 

In the same spirit we show here that if  $H$ is not a graph, but a $t$-uniform hypergraph then the roots of the corresponding polynomial $P_H(q)$ are bounded in absolute value in terms of the maximum degree of $H$. Recall that the degree of a vertex of a hypergraph is simply the number of edges incident to the given vertex\footnote{Note that there are many degree concepts for hypergraphs, this is the simplest possible one.}.

\begin{theorem}\label{thm2} Let $H = (V,E)$ be a $t$ uniform hypergraph. Let $D$ be the maximum degree of $H$. Then, the roots of $P_H(x)$ are bounded above in absolute value by $8etD$.
\end{theorem}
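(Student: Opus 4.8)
The plan is to run the cluster--expansion argument that underlies the theory of bounded--exponential--type polynomials, with the combinatorics adapted to the fact that a hyperedge touches $t$ vertices rather than $2$. First I would record the Whitney--type subgraph expansion: by inclusion--exclusion over the set of monochromatic edges, and using that forcing a set $A\subseteq E$ of edges to be monochromatic means exactly that each connected component of the spanning sub-hypergraph $(V,A)$ receives a single color,
\[
P_H(q)\;=\;\sum_{A\subseteq E}(-1)^{|A|}\,q^{c(A)},
\]
where $c(A)$ is the number of components of $(V,A)$. In particular $P_H$ is monic of degree $|V|$ (consistent with Section~\ref{GenChrom}), and one checks the structural features that make $\{P_H\}$ a polynomial of exponential type: multiplicativity over connected components, $P_{\{v\}}(q)=q$, and a deletion--contraction recursion in which contracting a hyperedge identifies its $t$ vertices.

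Next I would pass to a polymer representation. Grouping the subsets $A$ by the partition of $V$ into the vertex sets of the components of $(V,A)$ gives
\[
\frac{P_H(q)}{q^{|V|}}\;=\;\sum_{\{W_1,\dots,W_k\}}\ \prod_{i=1}^{k}\lambda(W_i),
\]
the sum over families of pairwise disjoint sets $W_i\subseteq V$ with $|W_i|\ge 2$ and $H[W_i]$ connected, where $\lambda(W)=q^{1-|W|}\sum_{A}(-1)^{|A|}$ and the inner sum ranges over connected spanning $A\subseteq E(H[W])$. This is an abstract polymer partition function with incompatibility given by intersection, so by a Koteck\'y--Preiss--type criterion $P_H(q)\ne 0$ as soon as there is $\alpha>0$ with $\sum_{W\ni v}|\lambda(W)|\,e^{\alpha|W|}\le\alpha$ for every vertex $v$. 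Bounding the alternating sum $|\sum_A(-1)^{|A|}|$ over connected spanning sub-hypergraphs of $H[W]$ by the number of spanning ``hypertrees'' of $H[W]$ (a Penrose--type cancellation; note a crude bound by the number of \emph{all} connected spanning sub-hypergraphs would be exponential in $D$ and useless), the condition reduces to
\[
\sum_{\ell\ge 2}\ \big(\#\ \text{sub-hypertrees of }H\text{ on }\ell\text{ vertices through }v\big)\cdot|q|^{1-\ell}\,e^{\alpha\ell}\ \le\ \alpha .
\]

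Finally I would estimate that count. A spanning hypertree on $\ell$ vertices uses between $\lceil(\ell-1)/(t-1)\rceil$ and $\ell-1$ hyperedges; growing it from $v$ one edge at a time, each new edge is chosen among the $\le D$ edges at some already--reached vertex and brings in at most $t-1$ new vertices, so a Cayley/Pr\"ufer--type encoding of the growth bounds the count by $(c\,t\,D)^{\ell-1}$ for an absolute constant $c$ (the factor $t$ coming from hyperedges spanning $t$ vertices, an extra factor $e$ from the tree-counting overcount). Substituting this geometric bound and optimizing over $\alpha$ yields nonvanishing for $|q|>8etD$, which is the theorem. I expect the main obstacle to be exactly this last step together with the hypergraph Penrose estimate: one must control the alternating sum over connected spanning sub-hypergraphs and count connected sub-hypergraphs through a fixed vertex with constants sharp enough to reach $8etD$, while the remainder is the routine exponential-type/cluster-expansion bookkeeping.
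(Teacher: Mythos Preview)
Your outline is correct and matches the paper's strategy: the Csikv\'ari--Frenkel ``bounded exponential type'' machinery the paper invokes (Theorems~\ref{expotype} and~\ref{thm3}) is exactly the cluster-expansion/Koteck\'y--Preiss argument you describe, and you have correctly isolated the two substantive combinatorial inputs---the hypergraph Penrose inequality and the count of connected hyperforests through a fixed vertex.

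Where the paper's execution differs from your sketch is in how those two steps are carried out. For the Penrose bound $|\sum_A(-1)^{|A|}|\le N(H)$ (Theorem~\ref{analogous}), the paper does not find a direct partition-into-trees argument as in the graph case; instead it passes through the hypergraphic matroid of Lorea and Frank--Kir\'aly--Kriesell, groups connected spanning sub-hypergraphs into equivalence classes each of which is the family of spanning sets of a matroid, and then invokes shellability of matroid complexes to get the alternating-sum bound on each class. For the tree count, rather than a Pr\"ufer-type growth encoding, the paper passes to the intersection graph $G_H$ on vertex set $E(H)$ (two hyperedges adjacent iff they meet), observes that a connected hyperforest on $s$ vertices has at most $s-1$ hyperedges and so maps injectively to a connected vertex set of size $\le s-1$ in $G_H$, and then applies Sokal's spanning-tree bound inside $G_H$ (maximum degree $\le tD$) to obtain $(etD)^{s-1}$. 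Your direct-growth sketch would presumably also work, but the paper's reductions have the virtue of recycling existing graph results rather than re-deriving the constants.
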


On the way to proving this theorem, we prove an inequality, which we think is of independent interest:
\begin{theorem}\label{analogous}[Proved in Section \ref{mainproof}] Let $H = (V,E)$ be a hypergraph.  Let $N(H)$ denote the number of connected, spanning hyperforests\footnote{Note that there are multiple ways to define a connected, spanning hyperforest. The precise definition that we use is provided in Definition \ref{hypergraphdefs}.} of $H$. 
	Then, 
	\begin{equation}
		\left| \sum_{\substack{E' \subset E, \\ (V, E') connected}}  (-1)^{|E'|}  \right| \leq N(H).
	\end{equation}
\end{theorem}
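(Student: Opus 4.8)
Write $C(H):=\sum_{E'\subseteq E,\ (V,E')\ \mathrm{connected}}(-1)^{|E'|}$ for the quantity inside the absolute value, so the goal is $|C(H)|\le N(H)$; observe that $C(H)$ is exactly the coefficient of $q$ in $P_H(q)=\sum_{E'\subseteq E}(-1)^{|E'|}q^{c(V,E')}$, since $c(V,E')\ge 1$ with equality precisely for connected spanning $E'$. If $(V,E)$ is disconnected there is no such $E'$, so $C(H)=0=N(H)$ and we may assume $H$ connected. The plan is to adapt the classical graph argument (which is the form in which this estimate enters Sokal's method): fix a total order on $E$ once and for all, and attach to each connected spanning $E'$ a canonical connected spanning hyperforest $\Phi(E')\subseteq E'$ by a greedy deletion sweep --- run through the edges of $E'$ from largest to smallest, deleting the current edge whenever the sub-hypergraph that remains is still connected (and still spanning). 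Definition~\ref{hypergraphdefs} is set up so that the output $\Phi(E')$ is a connected spanning hyperforest and $\Phi(F)=F$ for every such $F$.

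Partitioning the defining sum over the fibres of $\Phi$ gives $C(H)=\sum_F S_F$, where $F$ runs over the connected spanning hyperforests of $H$ and $S_F:=\sum_{E':\,\Phi(E')=F}(-1)^{|E'|}$; the theorem then reduces to the claim that $|S_F|\le 1$ for every $F$, since this yields $|C(H)|\le\sum_F|S_F|\le\#\{\text{connected spanning hyperforests}\}=N(H)$. To prove the claim I would describe the fibre $\Phi^{-1}(F)$ explicitly: it contains $F$, and more generally it should consist of exactly the sets $F\cup S$ in which every $g\in S$ is \emph{externally redundant for $F$}, meaning that at the moment the sweep reaches $g$, deleting $g$ still leaves a connected spanning sub-hypergraph. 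If this gives $\Phi^{-1}(F)=\{\,F\cup S:\ S\subseteq A_F\,\}$ for a set $A_F\subseteq E\setminus F$ of such edges, then $S_F=(-1)^{|F|}\sum_{S\subseteq A_F}(-1)^{|S|}=(-1)^{|F|}\mathbf 1[A_F=\varnothing]\in\{-1,0,1\}$, as needed; equivalently one phrases this as a sign-reversing involution on $\Phi^{-1}(F)\setminus\{F\}$ pairing $E'$ with $E'\triangle\{g\}$ for the largest externally redundant edge $g$.

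Identifying $\Phi^{-1}(F)$ as a Boolean cube is where I expect the real work to lie, and it is exactly the point at which Definition~\ref{hypergraphdefs} has to be used. In the graph case the step is classical: it is the nonnegativity of the coefficients of the Tutte polynomial (Crapo's internal/external activity expansion), together with $C(G)=(-1)^{|V|-1}T_G(1,0)$ and $T_G(1,0)\le T_G(1,1)=\#\{\text{spanning trees}\}$. One cannot simply transplant this, because for hypergraphs a connected spanning sub-hypergraph need not have full rank in the natural hypergraphic matroid --- there are connected spanning sub-hypergraphs containing no basis of it --- so the sum defining $C(H)$ genuinely ranges over more than the full-rank sets and there is no off-the-shelf Tutte polynomial to invoke. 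The resolution I anticipate is that Definition~\ref{hypergraphdefs} is engineered so that the sweep interacts with externally redundant edges exactly as in the graphic setting: one needs a local exchange property, roughly that if $g_1$ and $g_2$ are separately externally redundant for $F$ then the configuration containing both still retracts to $F$, which forces $\Phi^{-1}(F)$ to be downward closed and in fact a full cube. Verifying that property, by a case analysis of how deleting a hyperedge affects connectivity and the spanning condition, is the technical core of the argument.

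A natural alternative worth recording is the deletion--contraction identity $C(H)=C(H\setminus e)-C(H/e)$, valid for any hyperedge $e$ (discarding one-vertex loops created by the contraction, since they contribute $0$ to $C$); induction on $|E|$ then reduces Theorem~\ref{analogous} to the subadditivity $N(H\setminus e)+N(H/e)\le N(H)$. This is again a statement one must extract from Definition~\ref{hypergraphdefs}, and it visibly fails for crude notions of hyperforest, so either route ultimately rests on the same definitional input.
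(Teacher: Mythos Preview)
Your plan hinges on the fibres $\Phi^{-1}(F)$ of the greedy deletion being Boolean intervals $\{F\cup S:S\subseteq A_F\}$, which would force $|S_F|\le 1$. This is the step you flag as ``the technical core'', and it is not merely unverified: it is false under Definition~\ref{hypergraphdefs}. Take $V=\{1,2,3,4,5\}$ and
\[
f_1=\{1,2,3\},\quad f_2=\{3,4,5\},\quad g_1=\{1,4\},\quad g_2=\{2,5\},\quad g_3=\{4,5\},
\]
ordered $f_1<g_1<g_2<g_3<f_2$. Then $F=\{f_1,f_2\}$ is a minimal connected spanning sub\-hypergraph. For each single $g_i$ one checks $\Phi(F\cup\{g_i\})=F$, since $\{f_1,g_i\}$ fails to span $V$, so $f_2$ is kept and $g_i$ is then discarded. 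But for any pair $g_i,g_j$ the set $\{f_1,g_i,g_j\}$ \emph{is} connected spanning, so the sweep deletes $f_2$ first and $\Phi(F\cup\{g_i,g_j\})=\{f_1,g_i,g_j\}\neq F$. Thus
\[
\Phi^{-1}(F)=\{F,\ F\cup\{g_1\},\ F\cup\{g_2\},\ F\cup\{g_3\}\},
\]
which is three isolated points over $F$, not a cube, and $S_F=(-1)^2(1-3)=-2$, so $|S_F|>1$. Your proposed sign-reversing involution cannot exist here: the fibre has even cardinality away from $F$ in no way that pairs off. (One can check directly that $C(H)=-2$ and $N(H)=13$, so the theorem of course still holds.) The underlying reason the graph argument breaks is exactly the one you note in passing: for hypergraphs, ``connected spanning'' is strictly weaker than ``spanning in the hypergraphic matroid'', so the family you are retracting is not the family of spanning sets of any matroid, and there is no activity decomposition to appeal to.

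The paper's proof does not attempt to make $\Phi$ work. Instead it groups connected spanning subhypergraphs by a coarser equivalence: two subhypergraphs are equivalent when they share the same \emph{maximal bad partition} (into maximal partition-connected pieces) and the same set of edges crossing that partition. Each equivalence class then turns out to be exactly the set of complements of independent sets of the dual of a hypergraphic matroid, with the maximal hyperforests playing the role of bases. The inequality $\bigl|\sum_{S\in\mathcal I}(-1)^{|S|}\bigr|\le|\mathcal B|$ is then imported from the shellability/partitionability of matroid complexes, and summing over classes gives the theorem. So the matroid structure is used, but at the level of each equivalence class rather than globally, and the bound per class is by the number of bases in that class rather than by $1$.
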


This is a generalization of an analogous result for graphs due to Penrose \cite{Penrose}, where $H$ is a graph and $N(H)$ is replaced by $\tau(H)$, the number of spanning trees of $H$. Although this result has a fairly straightforward proof using the deletion-contraction recurrence relation for $\tau$, the number of spanning trees, we were not able to find an equally simple proof in the case of hypergraphs. Instead we use the theory of hypergraphic matroids developed in  \cite{Frank}  and \cite{Lorea} to prove this result in Section \ref{mainproof}. \\

This paper is organized as follows. We discuss the generalized chromatic polynomials in more detail in Section \ref{GenChrom}.  The theory of exponential type graph polynomials developed by Csikvari and Frenkel \cite{csi} is then discussed in Section \ref{ExpoType}. The proof of Theorem \ref{thm2} follows in Section \ref{Hypergraph}. The theory of hypergraphic matroids is introduced in section \ref{matroids} and finally, Theorem \ref{analogous} is proved in sub-section \ref{mainproof}.

\section{Generalized chromatic polynomials}\label{GenChrom}

We consider the following generalization of the chromatic polynomial already mentioned in the introduction. Recall that $H$ is a $t$--uniform hypergraph. Let $P_H(x)$ be the polynomial such that $P_H(q)$ equals the number of colorings of $H$ with $q$ colors such that there is no edge in $H$ with all of its  vertices colored with the same color. We call such a coloring a \emph{proper coloring} of $H$. That such a polynomial exists follows by the use of the inclusion exclusion principle:
\begin{equation}
	P_{H}(x) = \sum_{i=0}^n a_i(H) x^i =  \sum_{E' \subseteq E} x^{c(E')} (-1)^{|E'|},
\end{equation}
where $c(E')$ denotes the number of connected\footnote{A hypergraph $H'=(V',E')$ is connected if for every partition $V_1\cup V_2=V'$ there is an edge intersecting both $V_1$ and $V_2$. A connected component is a maximal induced connected subhypergraph.} components of the hypergraph $H'=(V(H),E')$.

This shows that $P_H(q)$ is indeed a polynomial\footnote{Alternatively, $P_H(q)=\sum_{j=1}^na_j(H)q(q-1)\dots (q-j+1)$ where $a_j(H)$ is the number of partitions of the vertex set into exactly $j$ sets such that no edge is contained in a set.}.\\

Then, from the above it follows that,
\begin{equation}
P_H(x) = x^n - e(H)x^{n-t+ 1} + Q_H(x)
\end{equation}
where $Q_H(x)$ is a polynomial of degree less than $n-t+1$, and $e(H)$ is the number of edges of $H$.

We note the following identity holds true for all positive integers $x,y$
\begin{equation}\label{expoeqn}
	\sum_{S \subseteq V} P(H[S], x) P(H[V(H)\setminus S]),y) = P(H, x+y),
\end{equation}
where $P(H[S], x)$ and $P(H[V(H)\setminus S]),y)$ are the generalized chromatic polynomial of the subhypergraphs induced by the subset $S$ and $V(H)\setminus S$. 
To see that  this is true, let $S$ be a subset of $V$. Then, $P(H[S],x)P(H[V(H)\setminus S], y)$ is the number of ways to color $H$ with $x+y$ (ordered) colors such that $S$ is colored with the first $x$ colors and $V(H)\setminus S$ is colored with the remaining $y$ colors.  Thus, summing over all subsets $S$ such that the vertices in $S$ are precisely the ones colored with the first $x$ colors gives equation \ref{expoeqn}.

Since this holds true for all natural numbers $x,y$, the polynomials must be equal everywhere, giving,
\begin{equation}
	\sum_{S \subseteq V} P(H[S], x) P(H[V(H)\setminus S]),y) = P(H, x+y),
\end{equation}
for all $x, y \in \mathbb{R}$.

\section{Preliminaries and lemmas: exponential type graph polynomials}\label{ExpoType}

Here we explain notation as introduced in \cite{csi}. 

\begin{definition}
	A graph polynomial is a map $f$ that maps every finite, simple graph $G = (V,E)$ to a polynomial in $\mathbb{C}[x]$. The graph polynomial is said to be monic if it is of degree $|V|$ and it has leading coefficient 1.  The graph polynomial is said to be of exponential type if $f(\emptyset, x) = 1$ and for every graph $G = (V,E)$, we have,
\begin{equation}
	\sum_{S \subseteq V} f(S, x) f(G [V \setminus S,]y) = f(G, x+y),
\end{equation}
where $f(S,x)$ is the graph polynomial of the graph induced by $G$ on $S$. 
\end{definition}

Note that this definition can be generalized to hypergraphs as follows. 

\begin{definition}
A hypergraph polynomial is a map $f$ that maps every finite, simple hypergraph $H = (V,E)$to a polynomial in $\mathbb{C}[x]$. It is said to be monic if it is of degree $|V(H)|$ and it has leading coefficient 1.  Further, it is said to be of exponential type if $f(\emptyset, x) = 1$ and for every hypergraph $H = (V,E)$, we have,
\begin{equation}
	\sum_{S \subseteq V} f(S, x) f(H [V \setminus S,]y) = f(H, x+y),
\end{equation}
where $f(S,x)$ is the hypergraph polynomial of the hypergraph induced by $H$ on $S$. 
\end{definition}

The following result due to Csikv\'ari and Frenkel \cite{csi} gives a characterization of exponential type graph polynomials via a complex function $b$ from the class of graphs with at least one vertex. This result extends to exponential-type hypergraph polynomials as the graph structure was not used in the proof of this result.

\begin{theorem}[\cite{csi}] \label{expotype}
Let $b$ be a complex-valued function on the class of graphs with at least one vertex. Define the graph polynomial $f_b$ as,
$$f_b(G,x) = \sum_{k=1}^{|V(G)|} a_k(G)x^k, $$
where, 
\begin{equation}
	a_k(G) = \sum_{\{S_1, \ldots, S_k\} \in \mathcal{P}_k}b(G[S_1]) \cdots b(G[S_k]),
\end{equation}
where the summation is over all partitions of $V(G)$ into $k$ non-empty sets.  Then, 
\begin{enumerate}
	\item For any function $b$, the graph polynomial $f_b(G,x)$ is of exponential type. 
	\item For any graph polynomial $f$ of exponential type, there exists a graph function $b$ such that $f(G,x) = f_b(G,x)$. More precisely, given that $f(G,x)$ is of exponential type, one can recover the function $b$ by setting $b(G) = a_1(G)$. 
\end{enumerate}
\end{theorem}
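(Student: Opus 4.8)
The plan is to prove both assertions by turning the functional equation into a single identity among the coefficient functions of $f$ and then arguing by induction. Write $f(G,x)=\sum_{k\ge 0}a_k(G)x^k$, allowing for the moment a constant term (which will turn out to vanish for nonempty $G$). Since the two sides of the exponential-type relation are equal as polynomials in $x$ and $y$, we may expand $(x+y)^k$ and compare the coefficient of $x^iy^j$; this yields the \emph{master identity}
\begin{equation}\label{master}
  \sum_{S\subseteq V}a_i(G[S])\,a_j(G[V\setminus S])=\binom{i+j}{i}\,a_{i+j}(G)\qquad(i,j\ge 0),
\end{equation}
valid for every exponential-type $f$ (conceptually it says that $G\mapsto f(G,x)$ is an exponential/group-like element in the incidence Hopf algebra of graphs, with $b$ playing the role of its logarithm). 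This identity will drive Part~2, while Part~1 I would instead verify by a direct combinatorial count. The one structural fact used throughout is that induced subgraphs compose, $(G[S])[S']=G[S']$ for $S'\subseteq S$, so that $b(G[S'])$ is unambiguous and $a_j(G[S])$ really is the $j$-th coefficient of the polynomial attached to $G[S]$; nothing else about graphs is used, which is exactly why the argument transfers to hypergraphs.

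For Part~1 I would work with positive integers $x,y$, which suffices since two polynomials agreeing at all positive integer pairs coincide. The quantity $a_k(G)x^k$ counts, with weight $b(G[S_1])\cdots b(G[S_k])$, the pairs consisting of an unordered partition $\{S_1,\dots,S_k\}$ of $V$ into $k$ nonempty blocks together with an assignment of one of $x$ colors to each block; hence $f_b(G,x+y)=\sum_k a_k(G)(x+y)^k$ counts the same data with a palette of $x+y$ colors. Splitting that palette into a first group of size $x$ and a second of size $y$, letting $S$ be the union of the blocks receiving a color from the first group, and using compositionality of induced subgraphs to factor the weight as a product over the blocks inside $S$ times a product over the blocks inside $V\setminus S$, one recognizes precisely $\sum_{S\subseteq V}f_b(G[S],x)\,f_b(G[V\setminus S],y)$; the map sending a colored partition of $V$ to the pair (colored partition of $S$, colored partition of $V\setminus S$) is a bijection. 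The normalization $f_b(\emptyset,x)=1$ is the convention that the empty partition of $\emptyset$ contributes the empty product $1$.

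For Part~2 set $b(G):=a_1(G)$ for every graph $G$ with at least one vertex. First, taking $i=j=0$ in \eqref{master} and inducting on $|V(G)|$ — the terms $S=\emptyset$ and $S=V$ together give $2a_0(G)$ using $a_0(\emptyset)=1$, and every intermediate term vanishes by induction — shows $a_0(G)=0$ for every nonempty $G$, consistent with the claimed formula. Now induct on $k\ge 1$ on the statement $a_k(G)=\sum_{\{S_1,\dots,S_k\}\in\mathcal{P}_k}b(G[S_1])\cdots b(G[S_k])$ for all $G$; the case $k=1$ is the definition of $b$. For the inductive step, apply \eqref{master} with $i=k$, $j=1$: since $a_1(\emptyset)=0$ and $a_k(\emptyset)=0$, the degenerate terms $S\in\{\emptyset,V\}$ drop out, leaving $(k+1)\,a_{k+1}(G)=\sum_{\emptyset\neq S\subsetneq V}a_k(G[S])\,b(G[V\setminus S])$; expand $a_k(G[S])$ by the inductive hypothesis (invoking $(G[S])[S']=G[S']$ once more), and observe that each partition $\{T_1,\dots,T_{k+1}\}$ of $V$ into $k+1$ nonempty blocks is produced exactly $k+1$ times, once for each choice of which block is cast as $V\setminus S$. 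Dividing by $k+1$ gives the claim; moreover a nonzero summand forces $|S|\ge k$, hence $|V|\ge k+1$, so $a_{k+1}(G)=0$ whenever $k+1>|V(G)|$ and the degree bound need not be assumed.

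The argument is essentially bookkeeping once \eqref{master} is available, so I do not anticipate a real obstacle. The step that deserves the most care is the multiplicity count in the inductive step of Part~2: one must pin down the factor $k+1$ correctly and confirm that the degenerate configurations ($S=\emptyset$, or $V\setminus S=\emptyset$, or an empty block) genuinely contribute nothing, which is exactly where $a_k(\emptyset)=0$ and the vanishing of the constant term on nonempty graphs are used. Extracting \eqref{master} and handling the conventions around the empty graph are routine, and, as already noted, no property of graphs beyond compositionality of induced substructures enters, so Theorem~\ref{expotype} holds verbatim for hypergraphs.
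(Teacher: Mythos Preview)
The paper does not supply its own proof of Theorem~\ref{expotype}; it is quoted from Csikv\'ari--Frenkel \cite{csi} and used as a black box, so there is nothing in the present paper to compare against. That said, your argument is correct and essentially the standard one: extracting the coefficient identity $\sum_{S}a_i(G[S])a_j(G[V\setminus S])=\binom{i+j}{i}a_{i+j}(G)$ from the defining relation, then running an induction on $k$ with the choice $(i,j)=(k,1)$ and the multiplicity count $k+1$, is exactly how \cite{csi} proceeds for Part~2. Your combinatorial reading of Part~1 via integer colour palettes is a legitimate alternative to verifying the master identity directly for $f_b$; both routes are short. The auxiliary observations you flag (vanishing of $a_0$ on nonempty graphs, $a_1(\emptyset)=0$, the automatic degree bound, and that only compositionality of induced substructures is used) are all handled correctly and justify the paper's remark that the result carries over verbatim to hypergraphs.
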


We need another definition before stating the results. 
\begin{definition}
	Let 
	$$f(G,x) = \sum_{i=0}^n a_i(G)x^i $$
be a monic exponential type graph polynomial. Suppose there is a function $R: \mathbb{N} \rightarrow [0,\infty)$ such that for any graph $G$ with maximum degree at most $D$, and any vertex $v \in V(G)$ and any $s \geq 1$, we have,
\begin{equation}
	\sum_{v \in S \subseteq V(G); |S| = s} |a_1(G[S])| \leq R(D)^{s-1}.
\end{equation} 
Then, we call $f$, a bounded exponential type graph polynomial. 
\end{definition}

Again we can extend this definition to hypergraph polynomials as follows:

\begin{definition}
	Let $H$ be a $t$-uniform hypergraph. 
	$$f(H,x) = \sum_{i=0}^n a_i(H)x^i $$
be a monic exponential type hypergraph polynomial. Suppose there is a function $R: \mathbb{N} \times \mathbb{N} \rightarrow [0,\infty)$ such that for any hypergraph $H$ with maximum degree at most $D$, and any vertex $v \in V(H)$ and any $s \geq 1$, we have,
\begin{equation}
	\sum_{v \in S \subseteq V(H); |S| = s} |a_1(H[S])| \leq R(D,t)^{s-1}.
\end{equation} 
Then, we call $f$, a bounded exponential type graph polynomial. 
\end{definition}

On the way to the proof of Theorem~\ref{thm2}, we will need the following result due to Csikv\'ari and Frenkel. Again this result extends to exponential type hypergraph polynomials as the graph structure was not used in the proof of this claim.

\begin{theorem}[\cite{csi}] \label{thm3} 
	Suppose $f(G,x)$ is a bounded exponential type graph polynomial, then, the absolute value of any root of $f$ is less than $c R(G)$ where  $c < 7.04$. 
\end{theorem}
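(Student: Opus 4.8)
The plan is an induction on the number of vertices $n=|V(G)|$ in which one controls, not $f(G,x)$ directly, but the ratio $f(G,x)/\bigl(x\,f(G-v,x)\bigr)$ across the deletion of a vertex $v$. The engine is a deletion recursion valid for any exponential type polynomial: writing $b=a_1$ as in Theorem~\ref{expotype} and splitting the partitions of $V(G)$ counted by $a_k(G)$ according to the block $T$ containing a fixed $v$, one gets $a_k(G)=\sum_{v\in T\subseteq V(G)}b(G[T])\,a_{k-1}(G[V(G)\setminus T])$ (with $a_0(\emptyset)=1$); multiplying by $x^k$ and summing over $k$ yields
\[
	f(G,x)\;=\;x\sum_{v\in T\subseteq V(G)}b\bigl(G[T]\bigr)\,f\bigl(G[V(G)\setminus T],x\bigr),
\]
whose $T=\{v\}$ term is exactly $x\,f(G-v,x)$ because monicity forces $b(\{v\})=a_1(\{v\})=1$. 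Only this identity and the bounded-type hypothesis are used below, which is why the same argument applies verbatim to hypergraph polynomials.

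I would prove the following by strong induction on $n$: there is a fixed $\delta\in(0,1)$ such that for every $G$ with maximum degree $\le D$, every $v\in V(G)$, and every $x$ with $|x|\ge c\,R(D)$,
\[
	\left|\frac{f(G,x)}{x\,f(G-v,x)}-1\right|\le\delta.
\]
This already contains the theorem: by induction $f(G-v,x)\ne0$, so $f(G,x)/\bigl(x\,f(G-v,x)\bigr)$ lies in the disc of radius $\delta<1$ about $1$ and is in particular nonzero, whence $f(G,x)$ has no root with $|x|\ge c\,R(D)$. The base cases $n\le1$ are immediate, since $f(\emptyset,x)=1$ and $f(\{v\},x)=x$ make the ratio identically $1$.

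For the inductive step, subtract $x\,f(G-v,x)$ from the recursion and divide by it:
\[
	\frac{f(G,x)}{x\,f(G-v,x)}-1=\sum_{\substack{v\in T\subseteq V(G)\\ |T|\ge2}}b\bigl(G[T]\bigr)\,\frac{f\bigl(G[V(G)\setminus T],x\bigr)}{f(G-v,x)}.
\]
The crux is to bound each ratio on the right. Writing $T=\{v\}\cup W$ and deleting the $|W|=|T|-1$ vertices of $W$ from $G-v$ one at a time, the ratio $f(G[V(G)\setminus T],x)/f(G-v,x)$ telescopes into a product of $|T|-1$ factors of the form $f(H-w,x)/f(H,x)$, where each $H$ is an induced subgraph of $G$ on strictly fewer than $n$ vertices (hence of maximum degree $\le D$) and $w\in V(H)$; by the inductive hypothesis each such factor has modulus at most $\bigl((1-\delta)|x|\bigr)^{-1}$. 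Therefore $\bigl|f(G[V(G)\setminus T],x)/f(G-v,x)\bigr|\le\bigl((1-\delta)|x|\bigr)^{-(|T|-1)}$, and summing over $T$ while invoking the bounded-type bound $\sum_{v\in T,\,|T|=s}|b(G[T])|\le R(D)^{s-1}$ gives
\[
	\left|\frac{f(G,x)}{x\,f(G-v,x)}-1\right|\le\sum_{s\ge2}\left(\frac{R(D)}{(1-\delta)|x|}\right)^{s-1},
\]
a convergent geometric series once $|x|>R(D)/(1-\delta)$. It then remains to check that a suitable choice of $\delta$ (optimizing the resulting threshold suggests $\delta=\sqrt2-1$) makes this sum $\le\delta$ as soon as $|x|\ge c\,R(D)$; this closes the induction and fixes the admissible constant $c<7.04$.

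The main obstacle, and the reason one cannot simply bound $f(G,x)/x^n-1$, is error propagation: a naive induction adds a fresh additive error at each vertex deletion and never closes, whereas phrasing the hypothesis multiplicatively (as a ratio) and re-expanding every ratio that appears as a telescoping product converts the accumulated error into a single convergent series. The points needing care are (i) that the intermediate graphs produced by telescoping genuinely have fewer vertices than $G$, so the inductive hypothesis legitimately applies, (ii) that all denominators are nonzero, which is built into the hypothesis through its non-vanishing consequence on smaller graphs, and (iii) the optimization of $\delta$ against $c$. The degenerate case $R(D)=0$, where $f(G,x)=x^n$, is excluded, or else handled by replacing the strict inequality in the conclusion by a non-strict one.
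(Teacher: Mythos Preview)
The paper does not actually prove Theorem~\ref{thm3}: it is quoted from \cite{csi}, and the only remark offered is that ``the proof of this statement is identical to the proof given there.'' So there is no proof in the present paper against which to compare your attempt.

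That said, your argument is correct and self-contained. The recursion $f(G,x)=x\sum_{v\in T}b(G[T])\,f(G[V\setminus T],x)$ follows from Theorem~\ref{expotype} exactly as you describe; the telescoping and the strong induction are set up properly (each intermediate graph $H$ is a proper induced subgraph of $G$, hence has maximum degree at most $D$ and strictly fewer vertices, so the hypothesis applies and all denominators are nonzero); and the geometric-series closure goes through. Your optimization in fact yields $c=(1+\delta)/\bigl(\delta(1-\delta)\bigr)$, minimized at $\delta=\sqrt2-1$ with value $c=3+2\sqrt2\approx5.83$, which is strictly better than the constant $7.04$ stated. The caveat about $R(D)=0$ is also correct: in that degenerate case the recursion collapses to $f(G,x)=x^n$, and the strict inequality fails at the root~$0$.
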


Note that in the paper of Csikv\'ari and Frenkel, the above theorem was phrased in a slightly different way, but the proof of this statement is identical to the proof given there.

\section{Hypergraph chromatic polynomials are bounded exponential type}\label{Hypergraph}

In the introduction we defined the hypergraphic chromatic polynomial as:
\begin{equation}
	P_H(x) = \sum_{E' \subseteq  E} x^{c(E')} (-1)^{|E'|},
\end{equation}
where $c(E')$ denotes the number of connected components of $V$. Suppose $P_H(x) = \sum_{i=0}^n a_i(H) x^i$, then, $a_0(H) = 0$ since $c(E') \neq 0$ for any $E' \subseteq E$. Further, 
\begin{equation}
	a_1(H) = \sum_{\substack{E' \subset E, \\ (V,E') connected}}  (-1)^{|E'|}.
\end{equation} 

We have already observed that hypergraphic chromatic polynomials are exponential type. In order to show that they are bounded exponential type we need to show that exists a constant $R(H,t)$ such that for  any vertex $v \in V(H)$ and any $s \geq 1$, we have,
\begin{equation}
	\sum_{v \in S \subseteq V(H); |S| = s} |a_1(H[S])| \leq R(H,t)^{s-1}.
\end{equation} 

In this section we prove that hypergraphic chromatic polynomials are bounded exponential type. To prove the theorem, we first obtain a bound of the coefficients $a_1$. The following definitions are required in order to state these bounds: 

\begin{definition}\label{hypergraphdefs}
Let $H = (V, E)$ be a hypergraph. For $X \subseteq V$ then, $\Gamma(X)$ is defined to be the subset of edges  $e \in E$ such that $e \cap X \neq \emptyset$. A hypergraph $(V',E')$ is said to be a hypercircuit if $|V'| = |E'|$ and $ |\Gamma(X)| \geq |X|+1$ for all non-empty and proper subsets $X \subset V'$.  A hypergraph is said to be a hyperforest if it contains no hypercircuit subgraph. A spanning hyperforest is a connected hyperforest whose edges span all the vertices of $H$. 

\end{definition}
Note that in the the case of graphs, these definitions match with the standard notions of circuits and and forests. As in the case of graphs, hyperforests of hypergraphs on $n$ nodes can have at most $|V|-1$ hyperedges : 

\begin{theorem}\label{hyperforestbound} [Proved in Section \ref{mainproof}] If $H$ is a hypergraph and $\mathcal{F}$ is a hyperforest of $H$, then, $\mathcal{F}$ has at most $|V|-1$ hyperedges. 
\end{theorem}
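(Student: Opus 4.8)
The plan is to prove the contrapositive in a localized form: \emph{every hypergraph with no empty edge in which the number of edges is at least the number of vertices contains a hypercircuit as a subhypergraph.} Granting this, if $\mathcal F$ is a hyperforest of $H$ with edge set $E'$ and vertex set $W=\bigcup_{e\in E'}e\subseteq V$, then we cannot have $|E'|\ge |W|$, for that would force a hypercircuit subhypergraph inside $\mathcal F$; hence $|E'|\le |W|-1\le |V|-1$, which is the assertion.

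To prove the localized claim I would argue by minimality. Suppose $(W,F)$ has $|F|\ge|W|$, and among all nonempty $F'\subseteq F$ satisfying $|F'|\ge |V(F')|$, where $V(F'):=\bigcup_{e\in F'}e$, pick $F_0$ with $|F_0|$ as small as possible (such $F'$ exists since $F$ itself works); set $W_0=V(F_0)$, which is nonempty since $F_0$ is nonempty and has no empty edge. The goal is to show $(W_0,F_0)$ is a hypercircuit. First I would show $|F_0|=|W_0|$: if $|F_0|>|W_0|$, deleting any edge $e$ from $F_0$ leaves a nonempty family with $|F_0\setminus e|=|F_0|-1\ge|W_0|\ge|V(F_0\setminus e)|$, contradicting minimality. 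Then I would verify the expansion inequality: if some nonempty proper $X\subsetneq W_0$ had $|\Gamma(X)|\le|X|$, with $\Gamma(X)=\{e\in F_0:e\cap X\neq\emptyset\}$, then $F_1:=F_0\setminus\Gamma(X)$ consists of edges disjoint from $X$, so $V(F_1)\subseteq W_0\setminus X$ and $|F_1|=|F_0|-|\Gamma(X)|\ge|W_0|-|X|=|W_0\setminus X|\ge|V(F_1)|$; since $X$ is proper, $|F_1|\ge 1$, and since $X$ is nonempty and every vertex of $W_0$ lies in some edge of $F_0$ we have $|\Gamma(X)|\ge 1$, so $|F_1|<|F_0|$, again contradicting minimality. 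Hence $|\Gamma(X)|\ge|X|+1$ for all nonempty proper $X\subsetneq W_0$, and with $|F_0|=|W_0|$ this is exactly the definition of a hypercircuit, which sits inside the original hypergraph.

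The step I expect to need the most care is the second deletion: one must take $\Gamma(X)$ relative to the current edge family $F_0$ rather than to $H$, keep track that both reduced families stay nonempty, and check that each reduction strictly decreases $|F_0|$ so that the minimality argument is not circular; the nonemptiness of $\Gamma(X)$ (which relies on $W_0$ being exactly the union of the edges of $F_0$) is the one point where a sloppy setup would break the proof. I note that, once the hypergraphic matroid machinery of Lorea and Frank is in place, the theorem also follows immediately by identifying hyperforests with the independent sets of a matroid on $E$ of rank at most $|V|-1$; but the elementary extremal argument above is self-contained and does not require it.
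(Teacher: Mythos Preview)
Your argument is correct and complete: the minimality device cleanly forces both $|F_0|=|W_0|$ and the expansion condition, and you were right to flag that $\Gamma(X)$ must be taken relative to $F_0$ and that $W_0=\bigcup_{e\in F_0}e$ is what guarantees $\Gamma(X)\neq\emptyset$ for nonempty $X$. The paper, by contrast, does not argue directly at all: it invokes Lorea's theorem that the hyperforests of $H$ are exactly the independent sets of a matroid on $E$, and then reads off from the Frank--Kir\'aly--Kriesell rank formula (by plugging in the trivial partition $\mathcal P=\{V\}$) that this matroid has rank at most $|V|-1$, so every independent set has at most $|V|-1$ elements. Your route is self-contained and avoids importing two external structural theorems; the paper's route is shorter once that machinery is in hand, and since the matroid framework is needed anyway for Theorem~\ref{analogous}, there is no real economy in the paper's context from avoiding it. You already anticipated this comparison in your closing remark.
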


Now, using Theorem \ref{analogous} (proved in section \ref{mainproof}), we have the following bound on $|a_1(H)|$: 

\begin{theorem} Let $H = (V,E)$ be a hypergraph.  Let $N(H)$ denote the number of connected, spanning hyperforests\footnote{Note that a connected, spanning hyperforest may not be maximal as in the case of ordinary graphs.} of $H$. 
	Then, 
	\begin{equation}
		|a_1(H)| = \left| \sum_{\substack{E' \subset E, \\ (V, E') connected}}  (-1)^{|E'|}  \right| \leq N(H).
	\end{equation}
\end{theorem}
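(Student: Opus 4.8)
The plan is short: read off the linear coefficient of $P_H$ from the inclusion–exclusion expansion and quote Theorem~\ref{analogous}. From
\begin{equation}
	P_H(x) = \sum_{E'\subseteq E} x^{c(E')}(-1)^{|E'|},
\end{equation}
the coefficient of $x^1$ is obtained by keeping exactly those $E'$ with $c(E')=1$, i.e.\ those for which $(V,E')$ is connected; hence
\begin{equation}
	a_1(H) = \sum_{\substack{E'\subseteq E\\ (V,E')\ \text{connected}}} (-1)^{|E'|}.
\end{equation}
(If $H$ itself is disconnected there is no such $E'$, so $a_1(H)=0=N(H)$ and the inequality is vacuous.) Applying Theorem~\ref{analogous} to the right‑hand side gives $|a_1(H)|\le N(H)$, which is exactly the claim. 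Nothing else is needed.

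So the real substance is Theorem~\ref{analogous}, and the approach I would take for it mirrors Penrose's argument in the graph case: construct a map $\Phi$ sending each connected spanning subhypergraph $E'$ of $H$ to a connected spanning hyperforest $\Phi(E')\subseteq E'$ in such a way that every fibre $\Phi^{-1}(\mathcal F)$ is a Boolean interval $[\mathcal F,\ \mathcal F\cup U_{\mathcal F}]$ for some edge set $U_{\mathcal F}$ disjoint from $\mathcal F$. Granting such a $\Phi$, the alternating sum collapses,
\begin{equation}
	a_1(H) = \sum_{\mathcal F}(-1)^{|\mathcal F|}\sum_{W\subseteq U_{\mathcal F}}(-1)^{|W|} = \sum_{\mathcal F:\ U_{\mathcal F}=\emptyset}(-1)^{|\mathcal F|},
\end{equation}
whence $|a_1(H)|\le\#\{\mathcal F:\ U_{\mathcal F}=\emptyset\}\le N(H)$, since each $\mathcal F$ on the right is a connected spanning hyperforest of $H$. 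In the graph case such a $\Phi$ is built greedily (e.g.\ $\Phi(E')$ = lexicographically minimal spanning tree of $E'$ for a fixed edge order), and the Boolean‑interval property is a consequence of the matroid axioms of the cycle matroid. To imitate this for hypergraphs one needs the hyperforests of $H$ to be the independent sets of a matroid on $E$ — which is precisely the Frank–Lorea theory invoked in Section~\ref{mainproof}, in the same circle of ideas as Theorem~\ref{hyperforestbound}.

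The main obstacle lies exactly here, and it is the reason the paper cannot just copy Penrose's proof. The deletion–contraction recurrence that makes the graph case easy has no evident hypergraph analogue (a connected spanning hyperforest need not be maximal, and contracting a hyperedge is ill‑behaved), so the existence of a greedy map into hyperforests with the required Boolean‑interval fibres is not something one gets for free; establishing it is where \cite{Frank} and \cite{Lorea} are genuinely used. Once the hypergraphic matroid is available, the remaining comparison $\#\{\mathcal F:\ U_{\mathcal F}=\emptyset\}\le N(H)$ is immediate from the definition of $N(H)$, and the present theorem is just the specialization recorded above.
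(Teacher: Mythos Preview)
Your first paragraph is exactly the paper's argument for this particular statement: the theorem is derived by reading off $a_1(H)$ from the inclusion--exclusion expansion of $P_H$ and invoking Theorem~\ref{analogous}. Nothing more is claimed or needed at this point.

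Where you diverge from the paper is in your sketch of Theorem~\ref{analogous} itself. The paper does \emph{not} build a Penrose-style map $\Phi$ from connected spanning subhypergraphs to connected spanning hyperforests with Boolean-interval fibres. Instead it partitions the spanning subhypergraphs of $H$ into equivalence classes according to their \emph{maximal bad partition} (the partition into maximal partition-connected pieces) together with the set of ``bad'' crossing edges; it then shows that the complements of the members of a single class $[L]$ are exactly the independent sets of the \emph{dual} of a hypergraphic matroid, and applies the general matroid inequality $\bigl|\sum_{S\in\mathcal I}(-1)^{|S|}\bigr|\le |\mathcal B|$ (a consequence of shellability of matroid complexes, cf.\ \cite{bjorner}). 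Summing over classes yields Theorem~\ref{analogous}. So the Frank--Lorea machinery is used, but not to manufacture a greedy $\Phi$; it is used to identify each equivalence class with a (co-)matroid complex.

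There is also a concrete obstruction to your proposed $\Phi$. In the graph case the lex-min construction works because the bases of the cycle matroid are precisely the spanning trees, i.e.\ the connected spanning forests. For hypergraphs this coincidence fails: the bases of the hypergraphic matroid on $E'$ need not be connected, and conversely a connected spanning hyperforest need not be maximal (the paper stresses this in the footnote). Thus ``take the lex-min basis inside $E'$'' does not in general produce a connected spanning hyperforest, and the Boolean-interval fibre property you want does not follow from the matroid exchange axioms alone. Your instinct that the difficulty is structural is right, but the resolution in the paper is not to rescue the Penrose map; it is to replace the single global Penrose partition by the class-by-class matroid inequality above.
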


Finally, we need the following theorem due to Sokal: 

\begin{theorem}[\cite{sokal}] \label{spanningbound} Consider a graph $G$ with maximum degree $D$. Let $\tau(G)$ denote the number spanning trees of $G$. Then, 
$$\sum_{S\subseteq V(G) \atop v\in S} \tau(G[S])\leq (eD)^{n-1}.$$
for any vertex given $v\in V(G)$.
\end{theorem}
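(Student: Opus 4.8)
The plan is to recast the left side combinatorially and then reduce to counting subtrees of an infinite regular tree. First, since $\tau(G[S])$ counts the spanning trees of $G[S]$, and a spanning tree of $G[S]$ is exactly a subtree of $G$ whose vertex set is $S$, the quantity $\sum_{v\in S\subseteq V(G)}\tau(G[S])$ equals the number of subtrees of $G$ containing $v$, and the partial sum over $|S|=s$ counts the $s$-vertex (equivalently $(s-1)$-edge) subtrees through $v$. So it suffices to bound, for a fixed $s\ge 1$, the number $b_{s-1}$ of $(s-1)$-edge subtrees of $G$ through $v$ by $(eD)^{s-1}$; a short check of the geometric sum $\sum_{s\ge1}(eD)^{s-1}$ (together with the slightly sharper estimate $b_k\le\tfrac12(eD)^k$ for $k\ge1$ that the argument below also yields) then recovers the stated bound $(eD)^{n-1}$ for the full sum.

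Second, I would remove the effect of cycles by passing to a tree. Rooting each subtree $T\ni v$ at $v$ and lifting through the universal covering map of $G$ gives an injection from $s$-vertex subtrees of $G$ through $v$ into $s$-vertex subtrees through the root of the universal cover $\widetilde G$, which is a (possibly infinite) tree of maximum degree at most $D$; among all such trees the number of $s$-vertex subtrees through a chosen root is maximized by the ``full'' tree in which the root has $D$ neighbours and every other vertex has exactly $D$ neighbours (hence $D-1$ children once rooted). Alternatively one can set up this comparison directly by a breadth-first exploration argument, avoiding covering spaces. It is therefore enough to count $(s-1)$-edge rooted subtrees through the root of this canonical tree.

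Third, the count on the canonical tree is exactly computable. Let $C(z)=\sum_k c_kz^k$, where $c_k$ is the number of $k$-edge subtrees hanging below an edge (rooted at a vertex with $D-1$ child slots), and $B(z)=\sum_k b_kz^k$, where $b_k$ is the number of $k$-edge subtrees through the root (which has $D$ slots). Splitting a subtree into the pieces hanging from the root's chosen children gives the algebraic relations $C=(1+zC)^{D-1}$ and $B=(1+zC)^D$. Applying Lagrange inversion to $u=zC$, equivalently $z=u(1+u)^{-(D-1)}$, yields the closed forms (for $s\ge2$) $b_{s-1}=\frac{D}{s-1}\binom{s(D-1)}{s-2}$, which one can double-check directly for $s=2,3$. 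Finally $\binom{N}{m}\le N^m/m!$ together with the Stirling bound $m!\ge(m/e)^m$ turns this into $b_{s-1}\le\frac{((s-1)D)^{s-1}}{(s-1)!}\le(eD)^{s-1}$, completing the proof.

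The technical heart --- and the only place real care is needed --- is extracting the constant $e$. A straightforward depth-first encoding of an $(s-1)$-edge subtree as a balanced Dyck-type word of length $2(s-1)$, descending at each step to one of at most $D$ children, only gives the bound $C_{s-1}D^{s-1}\le(4D)^{s-1}$, which is too weak. Replacing the Catalan factor by the exact Lagrange-inversion formula and then invoking $m!\ge(m/e)^m$ is precisely what improves $4$ to $e$; the reduction to the universal cover and the generating-function bookkeeping for $B$ and $C$ are routine by comparison. One must, in addition, verify the low-order cases in the geometric summation so that the stated form $(eD)^{n-1}$ holds on the nose rather than up to a constant.
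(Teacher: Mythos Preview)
The paper does not give its own proof of this theorem: it is stated with a citation to Sokal and used as a black box in the proof of Theorem~\ref{boundedexpo}. So there is no in-paper argument to compare against.

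Your approach is essentially Sokal's original proof: lift subtrees through $v$ to the universal cover, dominate by the infinite rooted tree with branching $D$ at the root and $D-1$ elsewhere, extract the exact count via Lagrange inversion (Fuss--Catalan numbers), and then use $m!\ge (m/e)^m$ to get the constant $e$. All of this is correct and is exactly the mechanism behind the cited result. Your remark that the crude Dyck-word encoding only gives $(4D)^{s-1}$ and that Lagrange inversion is what buys the improvement to $e$ is also accurate.

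One point worth flagging: the theorem as stated here, with the sum running over \emph{all} $S\ni v$ and the right-hand side $(eD)^{n-1}$ with $n=|V(G)|$, is not quite how Sokal phrases it, and it is not how the paper actually uses it either. In the application (the bound $|T_i^j|\le(etD)^{j-1}$ inside Theorem~\ref{boundedexpo}) only the fixed-size estimate
\[
\sum_{\substack{v\in S\subseteq V(G)\\ |S|=s}} \tau(G[S]) \;\le\; (eD)^{s-1}
\]
is invoked, and this is precisely what your Lagrange-inversion computation delivers directly. Your attempt to then sum over $s$ and squeeze the total under $(eD)^{n-1}$ via the sharper $b_k\le\tfrac12(eD)^k$ is a separate bookkeeping exercise that, as you note, needs low-order cases checked by hand; it is correct but it is not needed for anything in the paper. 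If you simply prove the fixed-$s$ version you have exactly what Theorem~\ref{boundedexpo} requires.
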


Now, using Theorems \ref{analogous}, \ref{hyperforestbound}, \ref{spanningbound} we are ready to prove our main result: 

\begin{theorem}\label{boundedexpo}
Let $H = (V,E)$ be a $t$-uniform hypergraph with maximum degree $D$. Then, the hypergraphic graph polynomial $P_H(x)$ is of bounded exponential type with $R(H,t) = etD$. 
\end{theorem}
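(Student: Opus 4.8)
The plan is to verify the bounded-exponential-type condition directly by chaining together the three ingredients just assembled. We already know $P_H(x)$ is monic of exponential type, so the only thing to check is the bound $\sum_{v \in S \subseteq V(H),\, |S| = s} |a_1(H[S])| \leq (etD)^{s-1}$ for every vertex $v$ and every $s \geq 1$. First I would fix $v$ and $s$ and apply Theorem \ref{analogous} to each induced subhypergraph $H[S]$, giving $|a_1(H[S])| \leq N(H[S])$, the number of connected spanning hyperforests of $H[S]$. Thus the left-hand side is bounded by $\sum_{v \in S,\, |S|=s} N(H[S])$, and it suffices to show this sum of hyperforest counts is at most $(etD)^{s-1}$.

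The key step is to compare hyperforests of a $t$-uniform hypergraph with spanning trees of an auxiliary graph of bounded degree. For each hyperedge $e$ of $H$ (a $t$-set of vertices), replace $e$ by a spanning tree on its $t$ vertices; more precisely, associate to $H$ a multigraph $G_H$ on the same vertex set $V$ obtained by choosing, for each hyperedge, some tree (say a path or star) on its vertices, so that a connected spanning hyperforest of $H$ using hyperedges $\{e_1,\dots,e_k\}$ corresponds to a spanning connected subgraph of $G_H$. By Theorem \ref{hyperforestbound}, such a hyperforest on $s$ vertices has at most $s-1$ hyperedges, and each hyperedge contributes $t-1$ graph edges, so the corresponding graph object has at most $(t-1)(s-1)$ edges; since it is connected and spanning on $s$ vertices it contains a spanning tree, and the number of ways to extend a spanning tree to such a subgraph, together with the choice of hyperforest, must be controlled. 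The cleanest route is: each hyperforest injects (or maps boundedly-to-one) into the set of spanning forests/trees of $G_H$, where $G_H$ has maximum degree at most $(t-1)D$ because each of the $\le D$ hyperedges at a vertex contributes at most $t-1$ incident graph edges. Then apply Theorem \ref{spanningbound} to $G_H$: $\sum_{v \in S,\, |S|=s} \tau(G_H[S]) \leq (e(t-1)D)^{s-1} \leq (etD)^{s-1}$.

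The main obstacle is making the correspondence between connected spanning hyperforests of $H[S]$ and spanning trees of $G_H[S]$ precise enough that the count $N(H[S])$ is genuinely bounded by $\tau(G_H[S])$ (or a modest multiple thereof that still fits under $(etD)^{s-1}$ rather than merely $(e(t-1)D)^{s-1}$). A hyperforest need not be maximal, and a single hyperedge can be "used" in several ways when expanded into tree edges, so the map from hyperforests to graph forests is not obviously injective. I would handle this by instead bounding $N(H[S])$ by the number of spanning trees of the graph $G_H'[S]$ obtained by replacing each hyperedge with a \emph{clique} on its $t$ vertices: a connected spanning hyperforest with $k \le s-1$ hyperedges yields a connected spanning subgraph of $G_H'[S]$ (take all clique edges of the chosen hyperedges), which contains at least one spanning tree, and distinct hyperforests on disjoint hyperedge sets can be separated by recording which clique a tree edge came from — but since $G_H'$ has maximum degree up to $(t-1)D$, Theorem \ref{spanningbound} still gives $\sum \tau(G_H'[S]) \le (e(t-1)D)^{s-1}$. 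The slack between $t-1$ and $t$ in the statement is exactly what absorbs any bounded multiplicity in this correspondence, so I would track that multiplicity carefully and check $(\text{mult})^{1/(s-1)} \cdot (t-1) \le t$ for the relevant ranges, which is where the routine but essential estimation lives. Concluding, the chain $\sum |a_1(H[S])| \le \sum N(H[S]) \le \sum \tau(G_H'[S]) \le (etD)^{s-1}$ establishes that $R(H,t) = etD$ works.
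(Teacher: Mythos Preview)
Your reduction via Theorem~\ref{analogous} to bounding $\sum_{v\in S,\,|S|=s} N(H[S])$ is correct, and passing to an auxiliary bounded-degree graph so as to invoke Theorem~\ref{spanningbound} is the right strategy. The gap is in the auxiliary graph you choose. With the clique expansion $G_H'$ on the \emph{same} vertex set, the whole argument rests on the inequality $N(H[S])\le \tau(G_H'[S])$ (or the multiplicity bound $(\text{mult})^{1/(s-1)}(t-1)\le t$), which you explicitly leave unproven. The multiplicity heuristic already fails on tiny examples: for $t=3$, $S=\{1,2,3,4\}$ and hyperedges $e_1=\{1,2,3\}$, $e_2=\{1,2,4\}$, $e_3=\{1,3,4\}$, each of the four connected spanning hyperforests $\{e_1,e_2\}$, $\{e_1,e_3\}$, $\{e_2,e_3\}$, $\{e_1,e_2,e_3\}$ has the star $\{12,13,14\}$ in its clique expansion, so four hyperforests share that tree while your bound would allow at most $(3/2)^3<4$. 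One can still hope for a system of distinct representatives (and here $\tau(K_4)=16\ge 4$), but that is a genuine combinatorial lemma you have not supplied, and it is not clear how to prove it without new ideas.

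The paper avoids this difficulty by choosing a different auxiliary graph: take $G_H$ to have the \emph{hyperedges} of $H$ as its vertices, two being adjacent when the hyperedges intersect; this intersection graph has maximum degree at most $tD$. A connected spanning hyperforest $F$ on $s$ vertices has at most $s-1$ hyperedges by Theorem~\ref{hyperforestbound}, and its hyperedge set is then a connected vertex subset of $G_H$ of size at most $s-1$ containing one of the at most $D$ hyperedges through $v$. This map is injective by construction. Each such connected vertex set is then sent (again injectively) to some spanning tree on it, and Theorem~\ref{spanningbound} applied to $G_H$ bounds those by $(etD)^{j-1}$ for each size $j\le s-1$; summing the geometric series and absorbing the extra factor $D$ (for the choice of hyperedge through $v$) uses only $et>2$. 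Thus the paper trades your unresolved injectivity problem for a change of viewpoint in which every map in the chain is manifestly injective.
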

\begin{proof}  
To show that $P_H(x)$ is of bounded exponential type we need to show that, 
\begin{equation}
	\sum_{v \in S \subseteq V(H); |S| = s} |a_1(H[S])| \leq (etD)^{s-1}.
\end{equation} 

Let 
$$\mathcal{B} = \{ F \subseteq H : F \text{ is a connected hyperforest },  v \in V(F), |V(F)| = s \}. $$

Using the bound in Theorem \ref{analogous} it follows that,
 \begin{equation}
	\sum_{v \in S \subseteq V(H); |S| = s} |a_1(H[S])| \leq |\mathcal{B}|.
\end{equation} 

So, it remains to show that $ |\mathcal{B}| \leq (etD)^{s-1}$.
Given $H$, we consider the following graph $G_H$. The nodes of $G_H$ correspond to elements of $E$. There is an edge between two vertices $e_1, e_2$ in $G_H$ if the hyperedges $e_1, e_2$ intersect. Note that the max degree of $G_H$ is at most $tD$. Suppose $e_1, \ldots, e_j$ are the hyperedges containing vertex $v$ in $H$.
Let,
$$\mathcal{B}_i = \{ T \subseteq V(G_H) : T \text{ is a connected},  e_i \in T, |T| \leq s-1\}. $$

Let $F \in \mathcal{B}$, so $|V(F) = s$. Thus, by Theorem \ref{hyperforestbound} $F$ has at most $s-1$ edges.
Now, we have an injective map from $\mathcal{B}$ to $\cup_{i} \mathcal{B}_i$ by mapping $e_i \in E(H)$ to $e_i \in V(G_H)$. 
Finally, let,
$$T_i^j = \{ (U, E') \subseteq G_H: (U, E') \text{ connected } e_i  \in U, |U| = j, |E'| = j-1\}. $$

Then, by Theorem~\ref{spanningbound} we have $|T_i^j| \leq (etD)^{j-1} $.
Since $T \in \mathcal{B}_i$ is a connected set of vertices, we can map it to any spanning tree on $T$, thus giving an injective map from 
$$\mathcal{B}_i \rightarrow \cup_1^{s-1} T_i^j.$$ 
Putting all together, we have an injective map from 
$$\mathcal{B} \rightarrow \cup_i \cup_1^{s-1} T_i^j.$$
Thus 
$$|\mathcal{B}| \leq \sum_i \sum_{j = 1}^{s-1}  |T_i^j| \leq D  \sum_{j = 1}^{s-1}(etD)^{j-1} \leq  D^{s-1}\sum_{j = 1}^{s-1}(et)^{j-1}  \leq (etD)^{s-1}.$$
The last inequality follows since $et >2$. 

\end{proof}

The above Theorem together with Theorem \ref{thm3} complete the proof of Theorem \ref{thm2}.

\section{Hypergraphs and matroids}\label{matroids}

To prove this theorem we define the hypergraphic matroid or circuit matroid, which was first introduced in \cite{Lorea} and later reintroduced in \cite{Frank}.

\subsection{The hypergraphic matroid}

\begin{theorem}[Lorea \cite{Lorea}] \label{Lorea} Given a hypergraph $H = (V,E)$, the sub-hypergraphs which are hyperforests form the family of independent sets of a matroid on ground set $E$. 
\end{theorem}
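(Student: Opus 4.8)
The plan is to realise the hyperforests as the independent sets of a \emph{count matroid} --- the matroid obtained from a monotone intersecting-submodular set function by Dilworth truncation --- and then to check that the circuits of that matroid are precisely the edge sets of hypercircuits in the sense of Definition~\ref{hypergraphdefs}.

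For $F\subseteq E$ write $V(F)=\bigcup_{e\in F}e$ for the set of vertices spanned by $F$, and put $f(F)=|V(F)|-1$ for $F\neq\emptyset$. Since $V(F\cup G)=V(F)\cup V(G)$ and $V(F\cap G)\subseteq V(F)\cap V(G)$, the map $F\mapsto|V(F)|$ is monotone and submodular, so $f$ is monotone and intersecting-submodular and satisfies $f(\{e\})=t-1\geq 1$. The standard construction of a matroid from such a function (Dilworth truncation; see Frank~\cite{Frank} and Lorea~\cite{Lorea}) then yields that
\[
\mathcal{I}=\Big\{\,F\subseteq E:\ |F'|\leq f(F')\ \text{ for every nonempty }F'\subseteq F\,\Big\}
\]
is the family of independent sets of a matroid $\mathcal{M}_H$ on the ground set $E$, with rank function $r(F)=\min\big\{\textstyle\sum_i f(F_i)\big\}$ over partitions $\{F_i\}$ of $F$ into nonempty parts. (This is the point at which one could instead verify the matroid axioms directly; the only nontrivial one, exchange, again rests on the submodularity of $|V(\cdot)|$, so invoking the general statement is cleaner.)

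It remains to identify $\mathcal{I}$ with the hyperforests, and I would do this by showing that the circuits of $\mathcal{M}_H$ are exactly the edge sets of hypercircuits; a set is then independent if and only if it contains no circuit, that is, if and only if it is a hyperforest. The key observation is that, inside a sub-hypergraph with edge set $C$ and $V':=V(C)$, the edges \emph{missed} by $\Gamma_C(X)$ are precisely those contained in $V'\setminus X$, so that the condition $|\Gamma_C(X)|\geq|X|+1$ for every nonempty proper $X\subset V'$ is equivalent to the condition $|C[W]|\leq|W|-1$ for every nonempty proper $W\subset V'$, where $C[W]$ denotes the set of edges of $C$ lying inside $W$. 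Granting this: if $(V',C)$ is a hypercircuit then $V'=V(C)$ and $|C|=|V'|$, so $r(C)\leq|V(C)|-1<|C|$ shows $C$ is dependent, while any nonempty proper $C'\subsetneq C$ satisfies $|C'|\leq|V(C')|-1$ --- consider separately the case $V(C')=V'$, where $|C'|\leq|C|-1=|V'|-1$, and the case $V(C')\subsetneq V'$, where $|C'|\leq|C[V(C')]|\leq|V(C')|-1$ --- so every proper subset of $C$ is independent and $C$ is a circuit. Conversely, if $C$ is a circuit then minimality forces $|C|\geq|V(C)|$, and if $|C|>|V(C)|$ then $C$ with any single edge removed would still be dependent, contradicting minimality; hence $|C|=|V(C)|=:|V'|$, and any nonempty proper $W\subset V'$ with $|C[W]|\geq|W|$ would exhibit the proper subset $C[W]\subsetneq C$ as dependent, so $|C[W]|\leq|W|-1$ for all such $W$, and $(V',C)$ is a hypercircuit.

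The main obstacle is this last step: turning the minimality of a dependent set into the precise inequalities of Definition~\ref{hypergraphdefs} (and conversely), where one must keep careful track of when the spanned vertex set of a sub-edge-set is all of $V'$ or a proper subset. A more hands-on alternative, reaching the same two steps, is to blow up each hyperedge $e$ into the clique on its $t$ vertices (each new graph edge retaining the label $e$), to check that $F\subseteq E$ is a hyperforest if and only if one can choose one labelled edge from each $e\in F$ so that the chosen edges form a forest in the blow-up, and to note that the latter family is a matroid on $E$ because it is the matroid induced from the cycle matroid of the blow-up by the natural bipartite incidence; the equivalence with hyperforests is then a Rado/Hall-type argument driven by the same hypercircuit inequality. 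Either way, once $\mathcal{M}_H$ is available the estimate $r(F)\leq|V(F)|-1\leq|V|-1$ yields Theorem~\ref{hyperforestbound} at once.
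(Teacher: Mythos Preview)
The paper does not prove this theorem; it is stated with attribution to Lorea and used as a black box, with Theorem~\ref{ranktheorem} (the rank formula, also cited) supplying the structural input the paper actually needs downstream. So there is no in-paper proof to compare against.

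Your plan is correct and is in fact the standard route to the result. Realising the hyperforests as the independent sets of the count matroid associated to $f(F)=|V(F)|-1$ is precisely how Frank--Kir\'aly--Kriesell frame the hypergraphic matroid, and your identification of the matroid circuits with the hypercircuits of Definition~\ref{hypergraphdefs} is carried out cleanly: the complementation $W=V'\setminus X$ that converts the $\Gamma$-inequality into the sparsity bound $|C[W]|\leq|W|-1$ is the right move, and the two case splits (whether $V(C')=V'$ in one direction, whether $|C|>|V(C)|$ in the other) handle minimality without gaps. Two small remarks. First, avoid citing \cite{Lorea} for the Dilworth-truncation step itself, since that is essentially the statement you are proving; the general fact that $\{F:|F'|\leq f(F')\text{ for all }\emptyset\neq F'\subseteq F\}$ is a matroid whenever $f$ is nonnegative, integer-valued, monotone and (intersecting-)submodular is the appropriate external input, and \cite{Frank} or a standard matroid reference suffices. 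Second, your alternative via blowing up each hyperedge to a clique and inducing from the graphic matroid through the edge--hyperedge bipartite incidence is also valid and yields the same matroid; the Hall-type argument you allude to is exactly the transversal characterisation of independence there.
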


Frank et. al. \cite{Frank} describe the rank function of the circuit-matroid. To state the theorem, we need the following definition: 
\begin{definition} For a subset $Z \subset E$, and a partition $\mathcal{P}$ of $V$, define $e_Z(\mathcal{P})$ to be the number of elements of $Z$ that have vertices in at least two parts of $\mathcal{P}$. 
\end{definition}

\begin{theorem}[Frank et. al. \cite{Frank}] \label{ranktheorem} The rank function $r_{H}$ of the circuit matroid of a hypergraph $H$ is given by the following formula: 
\begin{equation}
	r_{H}(Z) = \min_{\mathcal{P}} \{|V| - |\mathcal{P}| + e_Z(\mathcal{P}) : \mathcal{P} \text{ a partition of V}\}. 	
\end{equation}
\end{theorem}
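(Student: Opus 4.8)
The plan is to establish the two inequalities $r_H(Z)\le g(Z)$ and $r_H(Z)\ge g(Z)$ separately, where I abbreviate $g(Z)=\min_{\mathcal{P}}\{|V|-|\mathcal{P}|+e_Z(\mathcal{P})\}$ and use that $r_H(Z)$ is the maximum number of edges in a hyperforest contained in $Z$. The inequality $r_H(Z)\le g(Z)$ is the routine direction and uses only Theorem~\ref{hyperforestbound}. Fix a partition $\mathcal{P}=\{V_1,\dots,V_k\}$ of $V$, let $F\subseteq Z$ be a hyperforest with $|F|=r_H(Z)$, and split $F=F_0\cup F_1$, where $F_1$ is the set of edges of $F$ meeting two or more parts of $\mathcal{P}$. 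Then $|F_1|\le e_Z(\mathcal{P})$ because $F\subseteq Z$. For each $i$, the edges of $F_0$ contained in $V_i$ form a hyperforest of the hypergraph on vertex set $V_i$ carrying them (any hypercircuit subgraph of that hypergraph is a hypercircuit subgraph of $(V,F)$), so Theorem~\ref{hyperforestbound} bounds their number by $|V_i|-1$; summing over $i$ gives $|F_0|\le|V|-k$. Hence $r_H(Z)=|F|\le|V|-k+e_Z(\mathcal{P})$, and minimizing over $\mathcal{P}$ finishes this direction.

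For the reverse inequality I would route through an auxiliary matroid. Let $B$ be the bipartite graph with vertex classes $E$ and $\binom{V}{2}$ in which $e\in E$ is adjacent to a pair $p$ exactly when $p\subseteq e$, and let $M'$ be the matroid on ground set $E$ induced from the cycle matroid $M(K_V)$ of the complete graph on $V$ through $B$. By the classical rank formula for a matroid induced from another matroid through a bipartite graph (the deficiency form of Rado's theorem),
\begin{equation*}
 r_{M'}(Z)=\min_{A\subseteq Z}\Bigl\{|Z\setminus A|+r_{M(K_V)}\bigl(\textstyle\bigcup_{e\in A}\binom{e}{2}\bigr)\Bigr\},
\end{equation*}
and since $\bigcup_{e\in A}\binom{e}{2}$ is a union of complete graphs, one on each $e\in A$, two vertices of $V$ lie in the same component of this graph if and only if they lie in the same component of the hypergraph $(V,A)$; hence its rank in $M(K_V)$ equals $|V|-c(V,A)$, where $c(V,A)$ is the number of connected components of $(V,A)$. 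Moreover every independent set of $M'$ is a hyperforest of $H$: if an injection $e\mapsto\phi(e)\in\binom{e}{2}$ sends $F$ to a forest of $K_V$ but $F$ contained a hypercircuit $(V',E')$, then $\{\phi(e):e\in E'\}$ would be $|E'|=|V'|$ distinct edges lying within the $|V'|$ vertices of $V'$ and would therefore contain a cycle, contradicting the forest hypothesis. Consequently a largest independent subset of $Z$ in $M'$ is a hyperforest contained in $Z$, so $r_H(Z)\ge r_{M'}(Z)$.

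It remains to convert the minimum over $A$ into a minimum over partitions. Pick $A\subseteq Z$ attaining the minimum above and let $\mathcal{P}_A$ be the partition of $V$ into the connected components of $(V,A)$, so that $|\mathcal{P}_A|=c(V,A)$; since every edge of $A$ lies inside a single component of $(V,A)$, any edge of $Z$ meeting two parts of $\mathcal{P}_A$ belongs to $Z\setminus A$, whence $e_Z(\mathcal{P}_A)\le|Z\setminus A|$ and
\begin{equation*}
 g(Z)\le|V|-|\mathcal{P}_A|+e_Z(\mathcal{P}_A)\le|V|-c(V,A)+|Z\setminus A|=r_{M'}(Z)\le r_H(Z).
\end{equation*}
Combined with $r_H(Z)\le g(Z)$ this proves the formula. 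The step I expect to need the most care is the middle paragraph: correctly setting up the induced matroid $M'$, invoking the deficiency version of Rado's theorem with matching conventions, and verifying that the rank of a union of complete graphs in $M(K_V)$ is governed by hypergraph connectivity; by contrast the bound $r_H(Z)\le g(Z)$ and the final translation to partitions are essentially immediate.
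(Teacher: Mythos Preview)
The paper does not give its own proof of this theorem; it is quoted from Frank, Kir\'aly and Kriesell \cite{Frank} and used as a black box (Corollary~\ref{basisbound} and the characterisation of partition-connectedness are derived from it). So there is no in-paper argument to compare against, and your write-up supplies what the paper omits. Your proof is correct and is in fact very close to the argument in \cite{Frank}: the induced matroid $M'$ you build from the cycle matroid of $K_V$ through the bipartite incidence graph is precisely the realisation of the hypergraphic matroid used there, and the rank computation via the deficiency form of Rado's theorem together with the observation that $r_{M(K_V)}\bigl(\bigcup_{e\in A}\binom{e}{2}\bigr)=|V|-c(V,A)$ is exactly how the partition formula arises.

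One point deserves a comment. For the inequality $r_H(Z)\le g(Z)$ you invoke Theorem~\ref{hyperforestbound}. In the paper's logical order that theorem is obtained \emph{after} Theorem~\ref{ranktheorem}, as an immediate consequence of Corollary~\ref{basisbound}; so within this paper your appeal to it is circular. This is easily repaired: Theorem~\ref{hyperforestbound} has a short direct proof independent of the rank formula. Given a hypergraph $(V,F)$ with $|F|\ge|V|$, choose a nonempty $E'\subseteq F$ minimal with $|E'|\ge|\bigcup E'|$, and set $V'=\bigcup E'$; minimality forces $|E'|=|V'|$, and for every proper nonempty $Y\subsetneq V'$ the set $E''=\{e\in E':e\subseteq Y\}$ is a proper subset of $E'$, hence (again by minimality) $|E''|<|\bigcup E''|\le|Y|$, which is exactly the hypercircuit condition for $(V',E')$. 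Inserting this two-line argument (or simply noting it) removes the dependence and makes your proof self-contained.
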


\begin{corollary}\label{basisbound}
	Let $H = (V, E)$ be a hypergraph. Then the rank of the circuit matroid of $H$ is at most $|V|-1$. 
\end{corollary}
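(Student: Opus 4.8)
The plan is to read the bound off directly from the rank formula in Theorem~\ref{ranktheorem}. By definition, the rank of the circuit matroid of $H$ is $r_H(E)$, the rank of the entire ground set $E$. Applying Theorem~\ref{ranktheorem} with $Z = E$ gives
\begin{equation}
	r_{H}(E) = \min_{\mathcal{P}} \{|V| - |\mathcal{P}| + e_E(\mathcal{P}) : \mathcal{P} \text{ a partition of } V\},
\end{equation}
so it suffices to produce a single partition $\mathcal{P}$ for which $|V| - |\mathcal{P}| + e_E(\mathcal{P}) \le |V| - 1$.

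The natural choice is the trivial partition $\mathcal{P}_0 = \{V\}$ consisting of one block. Then $|\mathcal{P}_0| = 1$, and no edge of $H$ can have vertices in at least two parts of $\mathcal{P}_0$, since there is only one part; hence $e_E(\mathcal{P}_0) = 0$. Substituting, $|V| - |\mathcal{P}_0| + e_E(\mathcal{P}_0) = |V| - 1$, and since $r_H(E)$ is the minimum of this quantity over all partitions, we conclude $r_H(E) \le |V| - 1$.

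There is essentially no obstacle here — the corollary is an immediate specialization of the rank formula. The only minor point to keep straight is that the statement implicitly assumes $|V| \ge 1$ (for $|V| = 0$ the matroid is trivial and the bound reads $-1$, which is vacuous). As a consistency check, this matches Theorem~\ref{hyperforestbound}: a basis of the circuit matroid is a maximal hyperforest, whose number of edges equals $r_H(E)$, and Theorem~\ref{hyperforestbound} already tells us this is at most $|V|-1$.
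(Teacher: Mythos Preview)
Your proof is correct and is essentially identical to the paper's own argument: both apply Theorem~\ref{ranktheorem} with the trivial partition $\mathcal{P}=\{V\}$, observe that $e_E(\mathcal{P})=0$, and read off the bound $r_H(E)\le |V|-1$.
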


\begin{proof}
As seen above,
\begin{equation}
	r_{H}(Z) = \min_{\mathcal{P}} \{|V| - |\mathcal{P}| + e_Z(\mathcal{P}) : \mathcal{P} \text{ a partition of V}\}. 	
\end{equation}
	Using $\mathcal{P} = \{ V\}$, implies that the rank function is at most $|V|-1$ since $e_Z(\mathcal{P}) = 0$ in that case. 
\end{proof}

\subsection{Parition-connected hypergraphs}

Next, we define partition-connected hyper graphs, which, as we shall see, are precisely those hypergraphs that have rank $|V|-1$. Then, we will show that hypercircuits are partition-connected. Further, unions of intersecting partition-connected hypergraphs will also be shown to be partition-connected.  

\begin{definition}  Let $H = (V, E)$ be a hypergraph and let $\mathcal{P}$ be a partition of $V$. Then, let $N(\mathcal{P}) = e_{E}(\mathcal{P})$, that is, the number of hyperedges in $E$ with vertices in at least two parts of $\P$. A partition $\P$ of the vertex set is said to be a good partition of $N(\P) \geq |\P|-1$. Otherwise it is said to be a bad partition. A hypergraph is said to be partition connected if all partitions $P$ of the vertex set $V$ are good partitions. 
\end{definition}

Using theorem \ref{ranktheorem} it follows that partition connected hypergraphs have rank $|V|-1$:

\begin{corollary} A hypergraph $H = (V, E)$ is partition-connected if and only if the rank of the associated circuit-matroid $M$ is $|V|-1$. Thus, the basis elements of $M$ all have size $|V|-1$, when $H$ is partition-connected. 
\end{corollary}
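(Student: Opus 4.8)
The plan is to apply the rank formula of Theorem~\ref{ranktheorem} to the full ground set $Z = E$. Since $e_E(\mathcal{P}) = N(\mathcal{P})$ by definition of $N(\mathcal{P})$, this gives
\begin{equation*}
	r_H(E) = \min_{\mathcal{P}} \bigl\{ |V| - |\mathcal{P}| + N(\mathcal{P}) : \mathcal{P} \text{ a partition of } V \bigr\}.
\end{equation*}
By Corollary~\ref{basisbound}, or equivalently by evaluating the bracket at $\mathcal{P} = \{V\}$, we always have $r_H(E) \le |V| - 1$. So the equivalence will follow once we determine precisely when this minimum fails to drop below $|V|-1$.

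First I would prove that partition-connectedness implies $r_H(E) = |V|-1$. If $H$ is partition-connected then $N(\mathcal{P}) \ge |\mathcal{P}| - 1$ for every partition $\mathcal{P}$, hence $|V| - |\mathcal{P}| + N(\mathcal{P}) \ge |V| - |\mathcal{P}| + (|\mathcal{P}| - 1) = |V| - 1$ for every $\mathcal{P}$. Thus the minimum is at least $|V|-1$, and combined with the upper bound above it equals $|V|-1$.

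For the converse I would argue by contrapositive: if $H$ is not partition-connected, pick a bad partition $\mathcal{P}$, so that $N(\mathcal{P}) \le |\mathcal{P}| - 2$. Then the bracket evaluated at this $\mathcal{P}$ is at most $|V| - |\mathcal{P}| + (|\mathcal{P}| - 2) = |V| - 2$, whence $r_H(E) \le |V| - 2 < |V| - 1$. This establishes the stated equivalence. The final sentence is then immediate from the standard matroid fact that every basis of a matroid has cardinality equal to the rank of the ground set: when $H$ is partition-connected we have $r_H(E) = |V|-1$, so every basis of $M$ — that is, every maximal hyperforest of $H$ — has exactly $|V|-1$ edges.

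I do not expect a real obstacle here; the statement is essentially a direct translation of the rank formula. The only points requiring a moment's care are keeping the direction of the inequalities straight (a good partition forces the corresponding term up to at least $|V|-1$, while a single bad partition drives one term down to at most $|V|-2$) and invoking the equicardinality of matroid bases to pass from the rank statement to the statement about basis sizes.
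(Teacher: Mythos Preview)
Your proposal is correct and follows essentially the same approach as the paper: both apply the rank formula of Theorem~\ref{ranktheorem} to $Z=E$, use the defining inequality $N(\mathcal{P})\ge |\mathcal{P}|-1$ for the forward direction, and read off the converse directly from the formula (you phrase it contrapositively via a bad partition, which is equivalent). Your write-up is in fact slightly cleaner than the paper's, which contains a couple of minor sign/typo slips in the displayed inequalities.
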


\begin{proof}
	Suppose $H$ is partition-connected. The rank of the matroid equals $r_{H}(E)$. Using theorem \ref{ranktheorem} we know,
\begin{equation}\label{rankeq}
	r_{H}(E) = \min_{\mathcal{P}} \{|V| - |\mathcal{P}| + N(\mathcal{P}) : \mathcal{P} \text{ a partition of V}\}. 	
\end{equation}	
Since $H$ is partition connected, we also know that, $N(\mathcal{P}) \geq |\mathcal{P}| - 1$ for all partitions $\mathcal{P}$. Thus, 
\begin{equation}
	r_{H}(E) \leq |V|-1.	
\end{equation}	
Together with Corollary \ref{basisbound} this implies that, $r_{H}(E) = |V|-1.$

Conversely, suppose $r_H(E) = |V|-1$ for some hypergraph $H$. Then, equation \ref{rankeq} tells us that we must have $N(P) \geq |\mathcal{P}|$ for all partitions $P$ of $V$. Thus, by definition, $H$ is partition-connected. 
\end{proof} 

\begin{theorem} If hypergraph $H$ is a hypercircuit, then $H$ is partition connected. 
\end{theorem}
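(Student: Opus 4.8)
The plan is to prove the contrapositive statement in terms of partitions: I want to show that a hypercircuit $H = (V',E')$ has $N(\mathcal{P}) \geq |\mathcal{P}| - 1$ for every partition $\mathcal{P}$ of $V'$. Recall that $H$ being a hypercircuit means $|V'| = |E'|$ and $|\Gamma(X)| \geq |X| + 1$ for every non-empty proper subset $X \subset V'$. The key observation connecting these is that $\Gamma(X)$ counts edges meeting $X$, while $N(\mathcal{P})$ counts edges that are \emph{not} contained in a single block of $\mathcal{P}$; the complement count --- edges entirely inside some block --- is what I will need to bound.

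First I would fix a partition $\mathcal{P} = \{V_1, \dots, V_k\}$ with $k = |\mathcal{P}| \geq 2$ (the case $k = 1$ is trivial since $N(\mathcal{P}) = 0 \geq |\mathcal{P}| - 1 = 0$). Let $E_i$ be the set of edges of $H$ contained entirely in $V_i$, so that $N(\mathcal{P}) = |E'| - \sum_{i=1}^k |E_i| = |V'| - \sum_i |E_i|$. Thus the claim $N(\mathcal{P}) \geq k - 1$ is equivalent to $\sum_{i=1}^k |E_i| \leq |V'| - k + 1 = \sum_i (|V_i| - 1) + 1$. So it suffices to show that the edges living inside the blocks are "few," and the natural route is to show each $E_i$, as a subhypergraph on $V_i$, is itself a hyperforest on $V_i$ (hence $|E_i| \leq |V_i| - 1$ by Theorem~\ref{hyperforestbound}), and moreover that at least one of these inequalities is strict when $k \geq 2$, or equivalently that $\sum_i |E_i| < \sum_i |V_i| = |V'|$, which would already give $\sum_i |E_i| \leq |V'| - 1$. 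Actually the cleaner target: show $\sum_i |E_i| \leq |V'| - k + 1$ directly.

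To control $\sum_i |E_i|$ I would use the hypercircuit condition $|\Gamma(X)| \geq |X| + 1$. The idea is to pick, inside each block $V_i$, an appropriate subset and apply the expansion condition; the edges counted in $E_i$ are exactly those edges $e$ with $e \subseteq V_i$, i.e.\ edges in $\Gamma(V_i)$ that meet no other block. A promising concrete approach: apply the submodular-type / matroid machinery of Theorem~\ref{ranktheorem}. Since $H$ is a hypercircuit, it has $|E'| = |V'|$ edges but (being minimally dependent) every proper subset is a hyperforest, so $r_H(E') = |V'| - 1$. By the rank formula, $|V'| - 1 = \min_{\mathcal{Q}} \{|V'| - |\mathcal{Q}| + e_{E'}(\mathcal{Q})\}$, so for \emph{every} partition $\mathcal{Q}$ we have $|V'| - |\mathcal{Q}| + e_{E'}(\mathcal{Q}) \geq |V'| - 1$, i.e.\ $e_{E'}(\mathcal{Q}) \geq |\mathcal{Q}| - 1$, which is exactly $N(\mathcal{Q}) \geq |\mathcal{Q}| - 1$. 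So the whole statement reduces to the single fact that a hypercircuit has rank $|V'| - 1$ in its own circuit matroid.

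Hence the real content is: \emph{a hypercircuit is a minimal dependent set (circuit) of the hypergraphic matroid, and therefore has rank one less than its size.} I would establish this from Theorem~\ref{Lorea}: a hypercircuit $H$ is not a hyperforest (by definition of hyperforest, it contains a hypercircuit subgraph, namely itself), so $E'$ is dependent and $r_H(E') \leq |E'| - 1 = |V'| - 1$. For the reverse inequality $r_H(E') \geq |V'| - 1$ I need every proper subset $E'' \subsetneq E'$ to be independent, i.e.\ a hyperforest; this is where the expansion condition $|\Gamma(X)| \geq |X|+1$ does its work --- one shows a proper subhypergraph cannot itself satisfy the hypercircuit inequality on any vertex subset because removing an edge strictly decreases some $|\Gamma(X)|$, a counting argument on the incidences. \textbf{The main obstacle} I anticipate is precisely this last step: verifying that deleting any single edge from a hypercircuit leaves a hyperforest. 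This requires a careful combinatorial argument with the $|\Gamma(X)| \geq |X|+1$ condition --- showing that if $e \in E'$ and $(V', E' \setminus e)$ still contained a hypercircuit $(W, F)$, then the original $H$ would violate $|V'| = |E'|$ or the strict expansion bound --- and it is the place where the hypergraph structure genuinely differs from the graph case, so I would spend most of the effort making that deletion argument airtight.
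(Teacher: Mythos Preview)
Your route is valid, but it circles back to the paper's direct argument in a way that makes the matroid detour unnecessary. The paper stays with exactly your initial setup: given a bad partition $\{X_1,\dots,X_k\}$ with $N(\mathcal P)\le k-2$, one has $\sum_i |E_i| \ge |E'|-(k-2)=|V'|-k+2$, so by pigeonhole some block satisfies $|E_i|\ge |X_i|$. Then $X=V'\setminus X_i$ is a nonempty proper subset with $|\Gamma(X)|=|E'|-|E_i|\le |V'|-|X_i|=|X|$, contradicting the hypercircuit condition $|\Gamma(X)|\ge|X|+1$. That is the whole proof; the step you were missing after writing $\sum_i|E_i|\le |V'|-k+1$ as the target is just this pigeonhole-plus-complement observation.

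Your alternative---reducing to $r_H(E')=|V'|-1$ via the rank formula, then to the claim that a hypercircuit is a matroid circuit---is logically fine, but the ``main obstacle'' you flag dissolves into the very same counting. If $E'\setminus\{e\}$ contained a hypercircuit $(W,F)$, then $W\subsetneq V'$ (since $|F|\le|E'|-1<|V'|=|W|$ would fail for $W=V'$), and the $|F|=|W|$ edges of $F$ lie inside $W$, giving $|\Gamma(V'\setminus W)|\le |E'|-|W|=|V'\setminus W|$, the same contradiction. So the deletion lemma is two lines, not a genuine obstacle, and it is literally the paper's argument applied to $V'\setminus W$ instead of $V'\setminus X_i$. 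The rank formula and Lorea's theorem are not needed; your first instinct to bound $\sum_i|E_i|$ directly was the efficient path.
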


\begin{proof}
	Suppose $H$ is not partition connected. Hence there is a partition $\P = \{ X_1, \ldots, X_k \}$ of the vertex set such that the number of hyperedges not contained entirely in a single part is at most $|\P|-2 = k -2$. Let $N_i$ denote the number of hyperedges contained entirely in part $X_i$.  Then,
	\begin{equation}
		N_1 + \ldots + N_k + k - 2 \geq |X_1| + \ldots + |X_k| = |V|. 
	\end{equation}
Thus for some $i$ we have $N_i \geq |X_i|$. Hence, $$|\Gamma(V \setminus X_i)| \leq |E| - |N_i| \leq |V| - |X_i| =  |V \setminus X_i|.$$ This contradicts that hypothesis that $H$ is a hypercircuit. Hence $H$ must be partition connected.  
\end{proof}

\begin{lemma} \label{union} Suppose $H_1, H_2$ are partition-connected subhypergraphs of $H$ such that $V(H_1) \cap V(H_2) \neq \emptyset$. Then, $H_1 \cup H_2$ is also partition connected. 
\end{lemma}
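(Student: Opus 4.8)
The plan is to verify the defining condition of partition-connectivity for $H_1\cup H_2$ directly from those of $H_1$ and $H_2$. Write $V_i=V(H_i)$, $E_i=E(H_i)$, $V=V_1\cup V_2$, and $W=V_1\cap V_2\neq\emptyset$. Fix an arbitrary partition $\mathcal{P}=\{X_1,\dots,X_m\}$ of $V$; I must produce at least $m-1$ hyperedges of $E_1\cup E_2$ that meet at least two parts of $\mathcal{P}$, i.e. $e_{E_1\cup E_2}(\mathcal{P})\ge m-1$. The single structural fact I will lean on throughout is that any common hyperedge $e\in E_1\cap E_2$ has all its vertices in $V_1$ and in $V_2$, hence $e\subseteq W$; equivalently, every hyperedge meeting $V_2\setminus W$ lies in $E_2\setminus E_1$.

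First I would harvest crossing hyperedges from $H_1$: let $\mathcal{P}_1$ be the partition of $V_1$ into the nonempty traces $X_j\cap V_1$, with $k_1$ parts. Partition-connectivity of $H_1$ gives $\ge k_1-1$ hyperedges of $E_1$ meeting two parts of $\mathcal{P}_1$, and each such hyperedge, lying inside $V_1$, automatically meets two parts of $\mathcal{P}$. The delicate part is harvesting a second, \emph{disjoint} batch from $H_2$: restricting $\mathcal{P}$ to $V_2$ would double-count hyperedges of $E_1\cap E_2$, so instead I coarsen. Let $c$ be the number of parts $X_j$ meeting $W$, let $k_2$ be the number of parts meeting $V_2$, and let $\hat{\mathcal{P}}_2$ be the partition of $V_2$ obtained from the traces $\{X_j\cap V_2\}$ by fusing the $c$ traces that meet $W$ into a single block (which then contains all of $W$); it has $k_2-c+1$ parts. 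Partition-connectivity of $H_2$ gives $\ge k_2-c$ hyperedges of $E_2$ meeting two parts of $\hat{\mathcal{P}}_2$, and for each such hyperedge $e$ I would check: (i) since $\hat{\mathcal{P}}_2$ coarsens $\{X_j\cap V_2\}$, $e$ meets two parts of $\mathcal{P}$; and (ii) $e$ meets $V_2\setminus W$, because among the two parts of $\hat{\mathcal{P}}_2$ it meets, at least one is not the fused block, hence is a trace $X_j\cap V_2$ with $X_j\cap W=\emptyset$, which lies inside $V_2\setminus W$. By (ii) and the structural fact, all $\ge k_2-c$ of these hyperedges lie in $E_2\setminus E_1$, so they are disjoint from the first batch.

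Combining the two batches, $e_{E_1\cup E_2}(\mathcal{P})\ge(k_1-1)+(k_2-c)$, and to finish I would just check $(k_1-1)+(k_2-c)\ge m-1$: classifying the parts of $\mathcal{P}$ by whether they meet $V_1\setminus V_2$, $V_2\setminus V_1$, or both gives $m=k_1+k_2-C'$ with $C'$ the number of parts meeting both $V_1$ and $V_2$, and since every part meeting $W$ meets both, $C'\ge c$, whence $m-1\le k_1+k_2-c-1=(k_1-1)+(k_2-c)$. I expect the main obstacle to be precisely the bookkeeping around common hyperedges: the naive ``restrict to $V_1$, restrict to $V_2$, add'' over-counts $E_1\cap E_2$, and the remedy is to route the $H_2$-contribution through the coarsened partition $\hat{\mathcal{P}}_2$, which is rigged so that the hyperedges it detects are forced to touch $V_2\setminus W$ and therefore cannot lie in $E_1\cap E_2$. (When $H_1$ and $H_2$ are edge-disjoint, or when $|W|=1$ so that no hyperedge fits inside $W$, this subtlety disappears and one simply uses $\mathcal{P}|_{V_1}$ and $\mathcal{P}|_{V_2}$.)
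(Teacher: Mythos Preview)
Your proof is correct and follows essentially the same approach as the paper's: restrict $\mathcal{P}$ to $V(H_1)$ to harvest crossing edges from $H_1$, then on $V(H_2)$ fuse all blocks meeting $V(H_1)\cap V(H_2)$ into one so that the crossing edges supplied by $H_2$ must touch $V(H_2)\setminus V(H_1)$ and are therefore disjoint from the first batch. Your bookkeeping via $m=k_1+k_2-C'$ with $C'\ge c$ is a bit more careful than the paper's, which implicitly assumes every part is contained in $V(H_1)\setminus V(H_2)$, in $V(H_2)\setminus V(H_1)$, or meets $V(H_1)\cap V(H_2)$.
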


\begin{proof}
	Consider a partition $$\P = \{V_1, \ldots, V_k, U_1, \ldots, U_h, W_1, \ldots, W_l \}$$ of $V$.  Suppose $V_i$ consist of vertices in $H_1$ but not in $H_2$, $U_i$ consist of vertices in $H_2$ but not in $H_1$ and $W_i \cap V(H_1) \cap V(H_2) \neq \emptyset$.  Let $N_1$ be the number of hyperedges in $H_1$ with vertices in at least two of the parts $\{V_1, \ldots, V_k, W_1\cap V(H_1), \ldots, W_l\cap V(H_1)\}$. Then $N_1\geq k+l-1$ by the partition connectivity of $H_1$. Let $N_2$ be the number of hyperedges in $H_2$ with vertices in at least two of the parts $\{U_1, \ldots, U_h, (W_1\cup \ldots \cup W_l)\cap V(H_2)\}$. Then $N_2\geq h$ by the  partition connectivity of $H_2$. Note that these edges intersect $V(H_2)\setminus V(H_1)$, so they are not counted in the first $N_1$ edges. Hence
the number of edges intersecting at least two parts  is at least $N_1+N_2\geq k+l+h-1$. This shows that $H_1 \cup H_2$ is partition-connected. 
\end{proof}



\subsection{The maximal bad partition}

In this section we show that the sets of vertices of the maximal partition-connected sub-hypergraphs of a hypergraph $H$ form a bad partition and it is also the `maximal bad partition' in the following sense: 

\begin{theorem} Consider a bad partition $\P = \{ V_1, \ldots, V_k\}$ that maximizes $$ f(\P) = |\P| \frac{(|V| -1)}{|V|} - N(\P).$$ Then, 
	\begin{enumerate}
		\item If $H'$ is a maximal partition-connected subhypergraph of $H$, then $V(H') \subseteq V_i$ for some $i$. 
		\item If  for a maximal partition-connected subhypergraph $H'$ of $H$, $V(H') \subseteq V_i$ then in fact, $V(H') = V_i$. 
		\item In particular, if $H_1, \ldots, H_k$ are the maximal partition-connected subhypergraphs of $H$, then it is a partition of $H$, and it is a bad partition. 
	\end{enumerate}
Hence, such a bad partition is unique and it consists of the vertex sets of all the maximal partition connected subhypergraphs of $H$. 
\end{theorem}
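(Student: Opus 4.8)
The plan is to first establish that the maximal partition-connected subhypergraphs of $H$ are pairwise vertex-disjoint, so that they do indeed form a partition $\Q = \{H_1, \ldots, H_m\}$ of $V$; this is an immediate consequence of Lemma~\ref{union}, since if two maximal partition-connected subhypergraphs shared a vertex their union would be partition-connected and strictly larger, contradicting maximality. (A single vertex with no incident edges is vacuously partition-connected, so every vertex is covered.) Having $\Q$ in hand, I would then prove the three claims in the stated order, working with the fixed bad partition $\P = \{V_1, \ldots, V_k\}$ that maximizes $f(\P) = |\P|\frac{|V|-1}{|V|} - N(\P)$.

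For item (1), suppose toward a contradiction that some maximal partition-connected $H'$ has $V(H')$ meeting at least two parts of $\P$, say $V_1, \ldots, V_r$ with $r \geq 2$. The idea is to \emph{merge} these parts: form $\P' = \{V_1 \cup \cdots \cup V_r, V_{r+1}, \ldots, V_k\}$, so $|\P'| = |\P| - (r-1)$. Every edge of $H'$ that straddles two of $V_1, \ldots, V_r$ is counted in $N(\P)$ but not in $N(\P')$; by partition-connectivity of $H'$ applied to the induced partition of $V(H')$ by $V_1, \ldots, V_r$, there are at least $r-1$ such edges. Hence $N(\P') \leq N(\P) - (r-1)$, and a short computation gives $f(\P') \geq f(\P) + (r-1)\big(1 - \frac{|V|-1}{|V|}\big) > f(\P)$ provided $\P'$ is still a bad partition. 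The one thing to check is that merging preserves badness: since $\P$ is bad, $N(\P) \le |\P| - 2$, and $N(\P') \le N(\P) \le |\P| - 2 = |\P'| + (r-1) - 2$; one needs this to be $\le |\P'| - 2$, i.e. the merge must drop $N$ by at least as much as it drops $|\P|$ minus... — more carefully, one shows directly from the inequality $N(\P') \leq N(\P) - (r-1)$ and $N(\P) \le |\P| - 2$ that $N(\P') \le |\P'| - 2$, so $\P'$ is bad, and then the strict increase of $f$ contradicts maximality. This merging-and-comparing argument is the technical heart of the theorem and the step I expect to be the main obstacle, mostly in getting the bookkeeping on $N$ exactly right.

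For item (2), suppose $V(H') \subseteq V_i$ but $V(H') \subsetneq V_i$. Now the idea is the opposite move: \emph{split} $V_i$ as $V(H') \,\dot\cup\, (V_i \setminus V(H'))$ to get a refinement $\P''$ with $|\P''| = |\P| + 1$. Any edge lying inside $V_i$ but straddling this split is newly counted, so $N(\P'') \geq N(\P)$; I would argue that in fact $N(\P'') \le N(\P) + (\text{small amount})$ is not what we want — rather, since $V(H')$ is the vertex set of a \emph{maximal} partition-connected subhypergraph and $V_i \setminus V(H')$ is nonempty, there cannot be "too many" edges crossing, and one computes $f(\P'') \geq f(\P)$; combined with the uniqueness forced by the strict-maximizer argument (or by observing $\P''$ is also bad and re-running), this contradicts that $V(H') \ne V_i$. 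Concretely: if $\P''$ were bad and $f(\P'') > f(\P)$ we contradict maximality, and if $f(\P'') = f(\P)$ we would contradict the uniqueness we are about to derive — so it is cleanest to prove (1) and the strict maximizer is unique first, then derive (2). After (1) and (2), item (3) is immediate: every maximal partition-connected subhypergraph occupies exactly one block of $\P$ and every block contains one (take any vertex $v \in V_i$; it lies in a unique maximal partition-connected subhypergraph $H'$, which by (1) sits inside some block, necessarily $V_i$, and by (2) equals $V_i$), so $\Q = \P$; and $\P$ is bad by hypothesis. Finally, uniqueness of the bad maximizer follows because any such maximizer must, by (1)–(3), equal the partition into maximal partition-connected subhypergraphs, which does not depend on $\P$.
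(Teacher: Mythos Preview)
Your argument for item~(1) is correct and matches the paper's: merge the blocks meeting $V(H')$, use partition-connectivity of $H'$ to get $N(\P') \le N(\P) - (r-1)$, and compute $f(\P') = f(\P) + (r-1)/|V| > f(\P)$. Your treatment of item~(3) and of uniqueness, given (1) and (2), is also fine.

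The gap is in item~(2). Your two-piece split $\P'' = \{V(H'),\, V_i \setminus V(H'),\, V_2,\ldots,V_k\}$ increases $|\P|$ by exactly $1$, so to get $f(\P'') \ge f(\P)$ you would need $N(\P'') - N(\P) \le (|V|-1)/|V| < 1$, i.e.\ \emph{no} edge of $H$ inside $V_i$ may cross from $V(H')$ to $V_i \setminus V(H')$. Maximality of $H'$ does not force this. For a concrete obstruction, take $V_i = \{a,b,c\}$ with a single $3$-edge $\{a,b,c\}$: the only partition-connected subhypergraphs here are the singletons, so $H' = \{a\}$ is maximal, yet the split $\{\{a\},\{b,c\}\}$ has one crossing edge and your computation gives $f(\P'') = f(\P) - 1/|V| < f(\P)$. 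Your fallback of appealing to uniqueness of the maximizer is circular, since uniqueness is what (1)--(3) are meant to establish.

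The paper's fix is to split $V_i$ not into two pieces but into an entire bad partition $\Q = \{U_1,\ldots,U_s\}$ of $H[V_i]$, whose existence follows precisely from the maximality of $H'$ (if $V(H') \subsetneq V_i$ then $H[V_i]$ is not partition-connected). This refinement $\P'$ has $|\P'| = |\P| + (s-1)$ while $N(\P') = N(\P) + N(\Q) \le N(\P) + (s-2)$, so one gains $s-1$ blocks at a cost of at most $s-2$ in $N$, yielding $f(\P') \ge f(\P) + (s-1)(|V|-1)/|V| - (s-2) > f(\P)$ and the desired contradiction. In the example above this is the split $\{\{a\},\{b\},\{c\}\}$ with $s=3$, $N(\Q)=1$.
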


\begin{proof} 
 We begin by proving the first claim. If not, let $H'$ intersect $V_1, \ldots, V_m$. Then, since $H'$ is partition-connected we have, 
 $$N(\{V_1, \ldots, V_m \}) \geq m-1.$$ 
 Let $\P' = \{V_1 \cup \ldots \cup V_m, V_{m+1}, \ldots, V_k \}$. Then, 
$$N(\P') = N(\P) - N(\{ V_1, \ldots, V_m\}) \leq  k - 1 - (m-1) = k -m.$$ 
 Hence, $\P'$ is also a bad partition.  Further,
\begin{equation}
	\begin{split}
		f(\P') = |\P'| \frac{(|V| -1)}{|V|} - N(\P')  & = (|\P|-m+1)  \frac{(|V| -1)}{|V|} - N(\P) +  N(\{ V_1, \ldots, V_m\})  \\
		& \geq f(\P) - (m-1) \frac{(|V| -1)}{|V|} + m -1 \\
		& = f(\P) + \frac{(m -1)}{|V|}.  
	\end{split}
\end{equation} 
Thus, $f(\P') > f(\P)$, which contradicts the hypothesis. This proves the first claim. \\
Now suppose the second claim is false. Then, without loss of generality, $H' \subsetneq V_1$. Then, since $H'$ is a maximal partition-connected subhypergraph, $H(V_1)$ is not partition connected and hence must have a bad partition, say $\Q = \{U_1, \ldots, U_s \}$. Then, we can consider a new partition 
$$\P' = \{ U_1, \ldots, U_s, V_2, \ldots, V_k\}. $$
Note that, $|P'| = |P| + k -1$ and $N(\P') = N(\P) + N(\Q) \leq (k-2) + (s-2)  < |\P'| -1 $. Thus, $\P'$ is also a bad partition of $H$. Further, 
\begin{equation}
	\begin{split}
		f(\P') = |\P'| \frac{(|V| -1)}{|V|} - N(\P')  & = (|\P| + s -1)  \frac{(|V| -1)}{|V|} - N(\P) - N(Q)   \\
		& \geq f(\P) + (s-1) \frac{(|V| -1)}{|V|} - (s -2) \\
		& = f(\P) - \frac{(s -1)}{|V|} + 1 > f(\P). 
	\end{split}
\end{equation} 

Thus, $P'$ is a bad partition of $H$ such that $f(\P') > f(\P)$. This contradicts the original assumption and hence proves the second claim. \\
The third claim follows from Lemma~\ref{union} and the second claim.
\end{proof}

\begin{definition} A bad partition $\mathcal{P}$ that maximizes $f(\mathcal{P})$ as in the above theorem will be called the \emph{maximal bad partition} of $V$. 
\end{definition}

\subsection{Proof of Theorem~\ref{analogous}}\label{mainproof}

Finally, to complete the proof the Theorem \ref{analogous}, we first split the set of maximal spanning hyperforests $L$ of a hypergraph $H$ into equivalence classes $[L]$. Each of the equivalence classes is shown to form the independent set of a certain hypergraphic matroid. Finally we complete the proof by applying an extension of an inequality due to Penrose \cite{Penrose} to matroids.  We begin by defining the equivalence classes. 

\begin{definition} Let $\P = \{V_1, \ldots, V_k\}$ be a bad partition of the hypergraph $H'= (V,E')$.  Let $E(\P)$ be the set of hyperedges of $H$ such that they have vertices in at least two of the parts of $P$. Then, we shall call $E_{H'}(\P)$ the \emph{set of bad edges of the partition $\P$}. 
\end{definition}

\begin{definition} Let $L,K$ be spanning subhypergraphs of $H$. We say that $H \sim K$ if they have the same maximal bad partition $\P$ and if $E_L(\P) = E_K(\P)$, that is, the set of bad edges of $P$ is also the same for both $L, K$. Note that $\sim$ is an equivalence relation. Let $[L]$ denote the equivalence class of $L$ under this relation.  
\end{definition}

\begin{theorem} Let $[L]$ denote the equivalence class of a spanning subhypergraph $L$ of $H$. Let $L'$ denote the union of all the subhypergraphs in $[L]$. Let $H_1, \ldots, H_k$ be the  maximal partition-connected subhypergraphs of $L'$ and let $\mathcal{P}$ be the associated maximal bad partition. Then, any maximal hyperforest of $L'$ consists of the union of all the bad edges of the maximal bad partition $\P$ along with maximal hyperforests of $H_i$. 
\end{theorem}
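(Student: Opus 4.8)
The plan is to show that a maximal hyperforest $F$ of $L'$ decomposes in exactly the claimed way by matching up ranks. Recall that by Lorea's theorem (Theorem~\ref{Lorea}) the hyperforests of $L'$ are the independent sets of the circuit matroid $M_{L'}$, so a maximal hyperforest is a basis, and its size is $r_{L'}(E(L'))$. First I would compute this rank using Theorem~\ref{ranktheorem}: since $\mathcal{P} = \{V_1,\dots,V_k\}$ is the maximal bad partition of $L'$, evaluating the rank formula at $\mathcal{P}$ and using that $\mathcal{P}$ is the maximum of $f$ (hence, in a suitable sense, the \emph{unique worst} partition) should give $r_{L'}(E(L')) = |V| - k + N(\mathcal{P})$, where $N(\mathcal{P}) = e_{E(L')}(\mathcal{P})$ is the number of bad edges. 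Simultaneously, by the corollary following Theorem~\ref{ranktheorem}, each maximal partition-connected piece $H_i$ has a circuit matroid of rank exactly $|V(H_i)| - 1 = |V_i| - 1$, so the total size of a union of maximal hyperforests of the $H_i$ together with all $N(\mathcal{P})$ bad edges is $\sum_i (|V_i|-1) + N(\mathcal{P}) = |V| - k + N(\mathcal{P})$. The two counts agree, so if I can show that any such union is in fact a hyperforest of $L'$, it will be a basis, and conversely every basis has this size.

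Second, I would prove that the union $\bigcup_i F_i \cup E_{L'}(\mathcal{P})$ of maximal hyperforests $F_i$ of the $H_i$ with all bad edges is independent in $M_{L'}$, i.e.\ contains no hypercircuit. The idea here is that a hypercircuit $C$ is partition-connected (proved earlier in the excerpt), and a partition-connected subhypergraph cannot cross the maximal bad partition: by the first claim of the maximal-bad-partition theorem any partition-connected subhypergraph of $L'$ lies inside a single part $V_i$. Hence $C$ would have to lie entirely within some $H_i$, but $F_i$ was chosen to be hyperforest, contradiction. A small point to check is that a hypercircuit lying in $V_i$ actually lies in $H_i$ (i.e.\ uses only edges of $H_i$) — this should follow because $H_i$ is the induced subhypergraph on $V_i$ up to edges entirely inside $V_i$, and by maximality of $H_i$ those are exactly the non-bad edges incident to $V_i$.

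Third, for the converse direction I would show that every maximal hyperforest $F$ of $L'$ must contain \emph{all} the bad edges and restrict to a maximal hyperforest on each $H_i$. That all bad edges appear follows from a counting/exchange argument: if some bad edge $e$ were missing, then $F \cup \{e\}$ would contain a hypercircuit $C \ni e$; but $C$ is partition-connected hence confined to one part $V_i$, so $C$ cannot contain the crossing edge $e$ — contradiction, so $F + e$ is independent, contradicting maximality of $F$. Then $F \setminus E_{L'}(\mathcal{P})$ has size $(|V|-k+N(\mathcal{P})) - N(\mathcal{P}) = |V| - k = \sum_i(|V_i|-1)$, and since each part contributes an independent (hence $\le |V_i|-1$) set of edges of $H_i$, equality forces each restriction $F \cap E(H_i)$ to be a basis of $M_{H_i}$, i.e.\ a maximal hyperforest of $H_i$.

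The main obstacle I expect is pinning down the rank computation $r_{L'}(E(L')) = |V|-k+N(\mathcal{P})$ cleanly — that is, proving that the maximal bad partition really is the minimizer in Theorem~\ref{ranktheorem}. The extremal property was stated in terms of $f(\mathcal{P}) = |\mathcal{P}|\frac{|V|-1}{|V|} - N(\mathcal{P})$, not directly in terms of $|V|-|\mathcal{P}|+N(\mathcal{P})$, so I would need to reconcile the two optimization problems (over all partitions vs.\ over bad partitions), presumably by arguing that on \emph{good} partitions the rank expression $|V|-|\mathcal{P}|+N(\mathcal{P})$ is already $\ge |V|-1$, so the minimum is attained on a bad partition, and among bad partitions the one maximizing $f$ is the one making $|V|-|\mathcal{P}|+N(\mathcal{P})$ smallest. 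Once that correspondence is established, the rest is bookkeeping with matroid ranks and the structural facts about partition-connected subhypergraphs already proved above.
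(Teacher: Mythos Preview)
Your approach is correct, and it differs from the paper's in an instructive way. The paper argues entirely by exchange: if a maximal hyperforest $T$ misses a bad edge, adjoin it; the resulting hypercircuit (if any) would be partition-connected and, via Lemma~\ref{union}, would strictly enlarge some maximal $H_i$, contradiction; similarly if $T\cap E(H_i)$ is not already a basis of $M_{H_i}$. No rank computation is ever made. You instead match sizes: bound $r_{L'}(E(L'))$ from above by plugging $\mathcal{P}$ into Theorem~\ref{ranktheorem}, bound it from below by exhibiting the independent set $\bigcup_i F_i\cup E_{L'}(\mathcal{P})$, and then let the equality of counts force each restriction to be a basis. Your route has the pleasant side effect of actually computing $r_{L'}(E(L'))=|V|-k+N(\mathcal{P})$ and of proving the converse direction (every such union is a maximal hyperforest), which the paper uses implicitly in the subsequent corollary; the paper's route is shorter and avoids the rank formula entirely.

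Two remarks. First, the obstacle you flag in Step~1 is not real: you never need to show that $\mathcal{P}$ is \emph{the} minimizer in Theorem~\ref{ranktheorem}, nor reconcile $f(\mathcal{P})$ with $|V|-|\mathcal{P}|+N(\mathcal{P})$. Plugging $\mathcal{P}$ into the rank formula gives $r_{L'}(E(L'))\le |V|-k+N(\mathcal{P})$, and your Step~2 produces an independent set of exactly that size, so equality follows. Reorder the argument as Step~2 $\Rightarrow$ Step~1 $\Rightarrow$ Step~3 and the difficulty disappears. Second, in Step~2 you invoke ``the first claim of the maximal-bad-partition theorem'' to confine an arbitrary partition-connected subhypergraph to a single part, but that claim is stated only for \emph{maximal} partition-connected subhypergraphs. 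The bridge is Lemma~\ref{union}: a hypercircuit $C$ meeting $V_i$ gives $C\cup H_i$ partition-connected, hence $C\cup H_i=H_i$ by maximality, so $V(C)\subseteq V_i$; since $H_i$ is the full induced subhypergraph of $L'$ on $V_i$, the edges of $C$ lie in $F_i$. This is exactly the mechanism the paper uses as well.
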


\begin{proof}
	First, suppose $T$ is a maximal hyperforest of $L'$ and $T$ does not contain all the bad edges $P$. Claim: $T \cup P$ is also a hyperforest.  If not, then adding edges of $P$ must create a hypercircuit, say $A$. Suppose $V(A) \cap V(H_i) \neq \emptyset$ for some $i$. Then, since $A, H_i$ are both partition-connected, $A \cup H_i$ must also be partition connected. But $H_i$ is a maximal partition connected subhypergraph of $L'$. This is a contradiction. Hence $T \cup \P$ is a hyperforest. But this contradicts the maximality of $T$. Hence, $L'$ must contain all the edges in $P$.\\
	Next, suppose, for some $H_i$, $T$ does not contain a maximal hyperforest of $H_i$. Then, using a similar argument as above, we can add some hyper edge in $V(H_i)$ to $T$ without creating any new hypercircuits. This again contradicts the maximality of $T$. Thus, $T$ must contain maximal hyper forests of $H_i$ for all $i$.  \\
	Finally, suppose for some $H_i$, $T$ contains more than a maximal hyperforest of $H_i$, then $T$ must contain a circuit, thus contradicting that $T$ is a hyperforest. 
\end{proof} 

The following corollary immediately follows from the above theorem:

\begin{corollary}\label{cohypergraphic} Let $[L]$ denote the equivalence class of a spanning subhypergraph $L$ of $H$. Let $L'$ denote the union of all the subhypergraphs in $[L]$. Then, $[L]$ is precisely the set of all subhypergraphs of $L'$ that contain a maximal hyperforest of $H$. Hence, it follows by Theorem \ref{Lorea} that if $L = (V_L, E_L)$, then the set of independent sets of the co-hypergraphic matroid (that is, the dual matroid) $M^*_L$ of $L$ consists of the complements of the elements of $[L]$. Further let $\mathcal{B}(H, L) \subseteq [L]$ that consists of the maximal hyperforests of $H$ in $[L]$. Then the base elements of $M^*_L$ are the complements of the subgraphs in $\mathcal{B}(H, L)$.
\end{corollary}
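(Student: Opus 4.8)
The plan is to establish the set-theoretic description of $[L]$ asserted in the first sentence of the corollary using the preceding theorem, and then to read off the matroid statements as a formal consequence of matroid duality together with Theorem~\ref{Lorea}.

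I would begin with a preliminary reduction. Recall $L$ is a maximal spanning hyperforest of $H$; then $L'\supseteq L$, so $L'$ contains the basis $L$ of the hypergraphic matroid $M_H$ and hence has full rank $|V|-1$. Therefore the maximal hyperforests of $L'$ are exactly the maximal hyperforests of $H$ contained in $L'$, and for $K\subseteq L'$ the condition ``$K$ contains a maximal hyperforest of $H$'' is the same as ``$K$ is a spanning set of the hypergraphic matroid $M_{L'}$ of $L'$''. Writing $\mathcal{P}=\{V_1,\dots,V_k\}$ for the maximal bad partition of $L'$, $B=E_{L'}(\mathcal{P})$ for its set of bad edges, and $H_i=L'[V_i]$, the preceding theorem says every maximal hyperforest of $L'$ is $B$ together with a maximal hyperforest of each $H_i$; equivalently, $K\subseteq L'$ is a spanning set of $M_{L'}$ precisely when $B\subseteq E(K)$ and each induced subhypergraph $K[V_i]$ is partition-connected.

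The main step is then to identify $[L]$ with this family. If $K\in[L]$, then $K\subseteq L'$ by definition, and $K$ has the same maximal bad partition as $L$ with the same bad-edge set; the point that needs an argument is that this common maximal bad partition is $\mathcal{P}$ and the common bad-edge set is $B$. I would deduce this from $L'=\bigcup_{K\in[L]}K$: every $\mathcal{P}$-crossing edge of $L'$ lies in some member of $[L]$ and is thus a common bad edge, while each $L'[V_i]$ is a union over $[L]$ of partition-connected hypergraphs sharing the vertex set $V_i$, hence is partition-connected by Lemma~\ref{union}; so $\mathcal{P}$ is a bad partition of every $K\in[L]$ whose parts are exactly the vertex sets of the maximal partition-connected subhypergraphs of $K$, and uniqueness of the maximal bad partition pins down $\mathcal{P}$ and $B$. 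Granting this, $B\subseteq E(K)$ and each $K[V_i]$ is partition-connected, so $K$ is a spanning set of $M_{L'}$. Conversely, if $K\subseteq L'$ is a spanning set of $M_{L'}$, then $K\supseteq B$ and each $K[V_i]$ is partition-connected, so the maximal partition-connected subhypergraphs of $K$ have vertex sets $V_1,\dots,V_k$ and $E_K(\mathcal{P})=E_{L'}(\mathcal{P})=B$; hence $K$ has maximal bad partition $\mathcal{P}$ and bad-edge set $B$, so $K\sim L$. This proves the first sentence of the corollary.

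Finally, Theorem~\ref{Lorea} says the hyperforests of $L'$ are the independent sets of $M_{L'}$, so its bases are the maximal hyperforests of $L'$, which by the above are precisely the maximal hyperforests of $H$ lying in $[L]$, namely $\mathcal{B}(H,L)$. Matroid duality then gives immediately that the independent sets of $M^*_{L'}$ are the complements in $E(L')$ of the spanning sets of $M_{L'}$, that is, of the members of $[L]$, and that the bases of $M^*_{L'}$ are the complements of the bases of $M_{L'}$, that is, of the members of $\mathcal{B}(H,L)$. The expected obstacle is the one non-formal ingredient above: verifying that the maximal bad partition of $L'$ coincides with the common maximal bad partition of the members of $[L]$, with matching bad-edge sets, which combines Lemma~\ref{union}, the structure and uniqueness of the maximal bad partition, and the preceding theorem; everything after that is a routine application of matroid duality.
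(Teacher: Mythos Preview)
The paper gives no argument beyond declaring the corollary to ``immediately follow'' from the preceding theorem, so there is essentially nothing to compare against; your proposal is exactly the natural fleshing-out of that claim, using the same ingredients (the preceding structure theorem for maximal hyperforests of $L'$, Lemma~\ref{union}, uniqueness of the maximal bad partition, and standard matroid duality). You also correctly isolate the one genuinely non-formal step---that the common maximal bad partition and bad-edge set of the members of $[L]$ coincide with those of $L'$---which the paper leaves entirely implicit.

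One small remark: your opening line ``Recall $L$ is a maximal spanning hyperforest of $H$'' imports an assumption from the first sentence of Section~\ref{mainproof} that is not part of the corollary's statement, and your argument does not really need it. What you actually use is that $L'$ has full rank in the hypergraphic matroid (so that ``maximal hyperforest of $L'$'' and ``maximal hyperforest of $H$ contained in $L'$'' agree); it would be cleaner to deduce this directly from the description of $[L]$, or simply to phrase the first sentence of the corollary with ``maximal hyperforest of $L'$'' throughout and only identify these with maximal hyperforests of $H$ afterward. Also, in your verification that the maximal bad partition of $L'$ equals the common one $\mathcal{P}_L$ of the $K\in[L]$, it is safer to start from $\mathcal{P}_L$ (which you know) rather than from the maximal bad partition $\mathcal{P}$ of $L'$ (which you are trying to identify): show first that $\mathcal{P}_L$ is bad for $L'$ and that each $L'[V_i]$ is partition-connected, and then invoke uniqueness; as written, the sentence ``every $\mathcal{P}$-crossing edge of $L'$ lies in some member of $[L]$ and is thus a common bad edge'' quietly assumes $\mathcal{P}=\mathcal{P}_L$ before you have established it.
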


Finally, we need the following inequality related to matroids. It is shown in \cite{bjorner} (Theorem 7.3.3 and Proposition 7.2.2) and also mentioned in \cite{sokal} that every matroid complex is shellable, hence partitionable (we will not get into the definitions here), from which immediately follows the following corollary: 

\begin{corollary}\label{matroid-ineq} Suppose $M$ is a matroid whose set of independent sets is $\mathcal{I}$ and sets of base elements is $\mathcal{B}$, and ground set $E$. For $S \in \mathcal{I}$ let $|S|$ denote the size of set $S$. Then, 

	\begin{equation}
		\left| \sum_{S \in \mathcal{I}} (-1)^{|S|} \right| \leq |\mathcal{B}|.
	\end{equation}

\end{corollary}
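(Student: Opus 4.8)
The plan is to deduce the inequality from the \emph{partitionability} of the matroid independence complex, which is precisely the ingredient quoted from \cite{bjorner}. Recall that a simplicial complex $\Delta$ on ground set $E$ is partitionable if its face set decomposes as a disjoint union of intervals
\begin{equation*}
  \Delta \;=\; \bigsqcup_{\tau \text{ facet of } \Delta} \bigl[\sigma_\tau, \tau\bigr], \qquad \bigl[\sigma_\tau, \tau\bigr] = \{\, F : \sigma_\tau \subseteq F \subseteq \tau \,\},
\end{equation*}
for suitably chosen $\sigma_\tau \subseteq \tau$. Applying this to the independence complex of $M$, whose facets are exactly the bases $B \in \mathcal{B}$ and which is \emph{pure}, so that all facets have the same size $r$ (the rank of $M$), we obtain for each $B \in \mathcal{B}$ a set $\sigma_B \subseteq B$ such that every $S \in \mathcal{I}$ lies in exactly one interval $[\sigma_B, B]$.

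First I would split the alternating sum along this partition, writing
\begin{equation*}
  \sum_{S \in \mathcal{I}} (-1)^{|S|} \;=\; \sum_{B \in \mathcal{B}} \; \sum_{\sigma_B \subseteq S \subseteq B} (-1)^{|S|}.
\end{equation*}
Then I would evaluate each inner sum by the binomial theorem: if $|\sigma_B| = k$, the sets $S$ with $\sigma_B \subseteq S \subseteq B$ are obtained by adjoining to $\sigma_B$ an arbitrary subset of the remaining $r-k$ elements of $B$, so
\begin{equation*}
  \sum_{\sigma_B \subseteq S \subseteq B} (-1)^{|S|} \;=\; (-1)^{k}\sum_{j=0}^{r-k}\binom{r-k}{j}(-1)^{j} \;=\; (-1)^{k}(1-1)^{r-k},
\end{equation*}
which vanishes unless $k = r$, i.e. unless $\sigma_B = B$, in which case it equals $(-1)^{r}$.

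Combining the two displays gives $\sum_{S \in \mathcal{I}}(-1)^{|S|} = (-1)^{r}\,\bigl|\{\, B \in \mathcal{B} : \sigma_B = B \,\}\bigr|$, and taking absolute values yields the claimed bound by $|\mathcal{B}|$. I do not expect a genuine obstacle here: once partitionability is granted, everything else is the elementary cancellation above. The only point deserving a word of justification is that the intervals genuinely \emph{partition} $\mathcal{I}$ — so that no independent set is counted twice — and that their maximal elements are exactly the bases of $M$; this is where purity of the matroid complex is used, so that ``facet'' and ``basis'' coincide. (The degenerate case $r = 0$, where $\mathcal{I} = \mathcal{B} = \{\emptyset\}$, is covered by the same formula.)
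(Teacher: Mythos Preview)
Your proposal is correct and follows exactly the route the paper indicates: the paper does not spell out a proof but merely states that the corollary follows immediately from the fact (cited from \cite{bjorner} and \cite{sokal}) that every matroid complex is shellable, hence partitionable, and you have supplied precisely those details by splitting the alternating sum along the interval partition and collapsing each Boolean interval via the binomial identity.
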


Putting together Corollary \ref{cohypergraphic} and Theorem \ref{matroid-ineq} gives the following inequality:

\begin{equation}
	\left| \sum_{S \in [L]} (-1)^{|S^{C}|} \right| \leq |\mathcal{B(H,L)}|.
\end{equation}

Hence, 
\begin{equation}
	\left| \sum_{S \in [L]} (-1)^{|S|} \right| \leq |\mathcal{B(H,L)}|.
\end{equation}

Now, summing over all the equivalence classes $[L]$, gives us Theorem \ref{analogous}. 

\section{Acknowledgement}
The author would like to thank P\'eter Csikv\'ari for many useful conversations and encouragement.

\end{document}